\newtheorem{thm}{Theorem}[section]
\newtheorem{lem}[thm]{Lemma}
\theoremstyle{definition}
\newtheorem{prop-defn}[thm]{Proposition-Definition}
\newtheorem*{subcase*}{Subcase}
\newtheorem*{claim*}{Claim}
\newtheorem*{ack*}{Acknowledgements}
\newtheorem*{ex*}{Example}
\newtheorem*{case*}{Case}
\newtheoremstyle{named}{}{}{\itshape}{}{\bfseries}{.}{.5em}{#3}
\theoremstyle{named}
\newtheorem*{namedtheorem}{Theorem}
\newcommand{\RR}{\mathbb{R}}
\DeclareMathOperator{\re}{Re}
\DeclareMathOperator{\im}{Im}
\DeclareMathOperator{\slope}{slope}
\DeclareMathOperator{\Li}{Li}
\definecolor{AH}{HTML}{FF0000}
\definecolor{O2}{HTML}{8080FF}
\definecolor{BD}{HTML}{4C8F8A}
\definecolor{HF}{HTML}{FFD300}
\definecolor{AE}{HTML}{B266B2}
\title{The distribution of gaps for saddle connections on the octagon}
\author{Caglar Uyanik and Grace Work}
\address{\tt Department of Mathematics, University of Illinois at
 Urbana-Champaign, 1409 West Green Street, Urbana, IL 61801, USA
\newline \href{https://sites.google.com/site/caglaruyanik/}{https://sites.google.com/site/caglaruyanik/}
\newline \href{http://www.math.uiuc.edu/~work2}{http://www.math.uiuc.edu/\~{}work2}} \email{\tt cuyanik2@illinois.edu}
\email{\tt work2@illinois.edu}
\thanks{\today}
\begin{document}

\begin{abstract} We explicitly compute the limiting gap distribution for slopes of saddle connections on the flat surface associated to the regular octagon with opposite sides identified. This is the first such computation where the Veech group of the translation surface has multiple cusps. We also show how to parametrize a Poincar\'e section for the horocycle flow on $SL(2,\RR)/SL(X,\omega)$ associated to an arbitrary Veech surface $(X, \omega)$. As a corollary, we show that the associated gap distribution is piecewise real analytic.

\end{abstract}

\thanks{Both authors acknowledge support from U.S. National Science Foundation grants DMS 1107452, 1107263, 1107367 ``RNMS: GEometric structures And Representation varieties" (the GEAR Network). The first author was partially supported by the NSF grants of Ilya Kapovich (DMS-1405146) and Christopher J. Leininger (DMS-1207183). The second author was partially supported by the NSF grant of Jayadev Athreya (DMS-1351853).
}

\maketitle


\section{Introduction}

\subsection{Gap distribution for the Octagon}

Consider the surface $\mathcal{O}$ obtained by identifying parallel sides of the regular octagon by Euclidean translations, as shown in Figure \ref{octagon}. This results in a genus 2 translation surface with a single cone point of cone angle $6\pi$. A \emph{saddle connection} on this surface is a straight line connecting the cone point to itself. To each saddle connection, $\gamma$, we associate a holonomy vector, $v_\gamma \in \mathbb{C}$,  that records how far it travels in each direction. For example, the saddle connection drawn in Figure \ref{octagon} would have holonomy vector $2(1+\sqrt{2}) + i$.

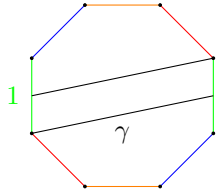
\begin{figure}[h!]
\centering
\begin{tikzpicture}
\coordinate (A) at (0,0);
\coordinate (B) at (-.707,.707);
\coordinate (C) at (1,0);
\coordinate (D) at (-.707,1.707);
\coordinate (E) at (0, 2.414);
\coordinate (F) at (1, 2.414);
\coordinate (G) at (1.707, 1.707);
\coordinate (H) at (1.707, .707);
\coordinate (I) at (1.707,1.207);
\coordinate (J) at (-.707,1.207);
\draw [red] (A) -- (B);
\draw [blue] (D) -- (E);
\draw [orange] (E) -- (F);
\draw [red] (F) -- (G);
\draw [green] (G) -- (H);
\draw [green] (B) -- (D) node[left=.25mm,pos=.5,color=green]{\small $1$};
\draw [blue] (H) -- (C);
\draw [orange] (C) -- (A);
\draw (B) -- (I) node[below=.25mm,pos=.5]{\small $\gamma$};
\draw (J) -- (G);
    \draw[fill] (B) circle (.5pt);
    \draw[fill] (C) circle (.5pt);
    \draw[fill] (D) circle (.5pt);
    \draw[fill] (E) circle (.5pt);
    \draw[fill] (F) circle (.5pt);
        \draw[fill] (A) circle (.5pt);
    \draw[fill] (G) circle (.5pt);
    \draw[fill] (H) circle (.5pt);
\end{tikzpicture}
\caption{The regular octagon with parallel sides identified to obtain the translation surface $\mathcal{O}$. A saddle connection $\gamma$ has been drawn connecting the cone point to itself.}\label{octagon}
\end{figure}



We are interested in the slopes of holonomy vectors coming from the saddle connections on the octagon. Following the convention introduced by \cite{Ath13,ACL}, we refer to them as ``slopes of saddle connections". Since the set of vectors is symmetric with respect to coordinate axes, we restrict our attention to slopes in the first quadrant.
 
Let
\[
\mathbb{S}_{\mathcal{O}}=\left\{\slope(v_{\gamma})=\frac{\im(v_\gamma)}{\re(v_\gamma)}\mid  \text{$\gamma$ is a saddle connection on $\mathcal{O}$ and}\ 0<\re(v_\gamma), 0\le \im(v_\gamma)\right\}
\]
We write $\mathbb{S}_{\mathcal{O}}$ as an increasing union of $\mathbb{S}_{\mathcal{O}}^R$ for $R\to\infty$, where
\[
\mathbb{S}^R_{\mathcal{O}} := \left\{\slope(v_{\gamma})\mid \text{$\gamma$ is a saddle connection on $\mathcal{O}$ and}\ 0<\re(v_\gamma)\le R, 0\le \im(v_\gamma)\le R \right\}. 
\] In other words, $\mathbb{S}_{\mathcal{O}}^R$ is the set of (finite) non-negative slopes of saddle connections on $\mathcal{O}$ that lie in the first quadrant and in the $\ell_\infty$ ball of radius $R$ around the origin. Let $N(R)=|\mathbb{S}^R_{\mathcal{O}}|$ be the cardinality of the set $\mathbb{S}^R_{\mathcal{O}} $. Veech showed that $N(R)$ grows quadratically with $R$, moreover the saddle connection directions on any Veech surface $(X,\omega)$, in particular $\mathcal{O}$, equidistribute on $S^1$ with respect to Lebesgue measure on the circle, see \cite{Vee98}.

Write $\mathbb{S}_{\mathcal{O}}^R$ as a set of increasing slopes:
\[
\mathbb{S}_{\mathcal{O}}^R=\left\{0\le s_0^R < s_1^R < s_2^R < \cdots < s_{N(R)-1}^R\right\}. 
\]
Since $N(R)$ grows quadratically, it is natural to define the set of \emph{renormalized slope gaps} for $\mathcal{O}$ as \[
\mathbb{G}_{\mathcal{O}}^R := \left\{R^{2}(s_{i}^R - s_{i-1}^R) : 1 \leq i \leq N(R)-1, s_i \in \mathbb{S}_{\mathcal{O}}^R\right\}.
\]



Our main theorem describes the asymptotic behavior for the distribution of the set of renormalized slope gaps:

\begin{thm}\label{octthm} There is a limiting probability distribution function $f:[0,\infty)\to[0,1]$ such that 
\[
\lim_{R\to\infty}\frac{|\mathbb{G}_{\mathcal{O}}^{R}\cap(a,b)|}{N(R)}=\int_{a}^{b}f(x)dx.
\]
The function $f$ is continuous, piecewise differentiable with seven points of non-differentiability. Each piece is expressed in terms of elementary functions. The graph of the function $f$ is shown below and an explicit description is given in Section \ref{verticalstrip}.  

\begin{figure}[h!]
\centering
\includegraphics[scale=0.8]{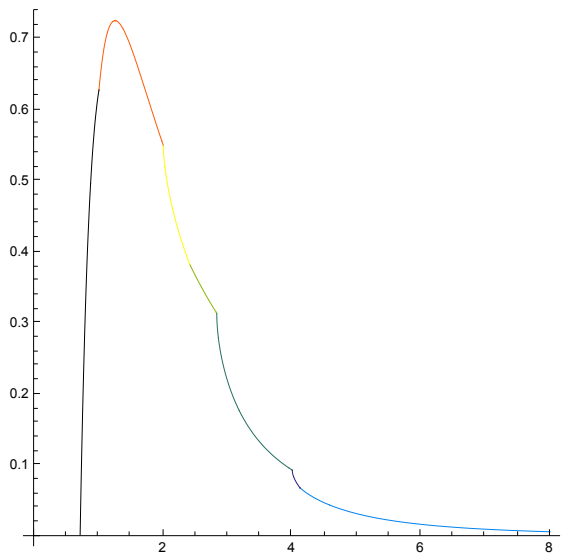}
\caption{Limiting gap distribution for the octagon}
\label{dist}
\end{figure}
\end{thm}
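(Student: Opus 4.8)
The plan is to realize the renormalized slope gaps as return times of the horocycle flow to a Poincar\'e section on $SL(2,\RR)/SL(\mathcal{O})$, where $SL(\mathcal{O})$ denotes the Veech group of the octagon, and then to compute the resulting return-time distribution explicitly. The starting point is the standard dictionary between slopes of holonomy vectors and the horocycle flow: if $\Lambda$ denotes the set of holonomy vectors of saddle connections on $\mathcal{O}$, then $\Lambda$ is invariant under $SL(\mathcal{O})$, and the horocycle $h_s = \left(\begin{smallmatrix} 1 & 0 \\ s & 1 \end{smallmatrix}\right)$ shifts all slopes by $s$ while shearing $\Lambda$. First I would fix a normalization that singles out, for each surface in the $SL(2,\RR)$-orbit of $\mathcal{O}$, a distinguished holonomy vector (the shortest vector whose direction is about to become horizontal), and declare the section $\Omega$ to consist of those surfaces for which this distinguished vector is horizontal with horizontal component in a fixed range. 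Consecutive slopes $s_{i-1}^R < s_i^R$ then correspond to successive visits of a horocycle orbit to $\Omega$, and the renormalized gap $R^2(s_i^R - s_{i-1}^R)$ converges to the first return time of the horocycle flow to $\Omega$; the normalization by $R^2$ matches the quadratic growth of $N(R)$ quoted above.

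Next I would parametrize $\Omega$ concretely. Since $SL(2,\RR)/SL(\mathcal{O})$ is three-dimensional and the horocycle is one-dimensional, $\Omega$ is a two-dimensional transversal, which I would coordinatize by the horizontal component of the distinguished vector together with the horizontal component of the ``next'' vector that will trigger a return. The $SL(2,\RR)$-invariant measure disintegrates to give a smooth (in fact Lebesgue) measure $\mu$ on these coordinates. The essential new feature of the octagon, as opposed to the golden L, is that the Veech group has several cusps: the octagon admits more than one cylinder decomposition direction up to the Veech group, so $\Omega$ breaks into several pieces indexed by the cusps, and on each piece the return time is governed by a different cylinder configuration. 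I would therefore identify the cusps of $SL(\mathcal{O})$ and their associated cylinder decompositions, and on each corresponding component of $\Omega$ write down the return-time function $\tau$ in closed form by tracking how the distinguished vector and its successor change under shearing.

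Once $\tau$ and the measure $\mu$ are in hand, the limiting distribution follows from equidistribution of long horocycle orbits on $SL(2,\RR)/SL(\mathcal{O})$, which is available for lattices (via Dani--Smillie / Ratner) and is the homogeneous counterpart of the circle equidistribution of \cite{Vee98}: the proportion of returns with return time in $(a,b)$ converges to the $\Omega$-measure of $\tau^{-1}(a,b)$, normalized by $\mu(\Omega)$. Concretely,
\[
\lim_{R\to\infty}\frac{|\mathbb{G}_{\mathcal{O}}^R\cap(a,b)|}{N(R)} = \frac{1}{\mu(\Omega)}\,\mu\!\left(\tau^{-1}(a,b)\right) = \int_a^b f(x)\,dx,
\]
so that $f(t)$ is the derivative in $t$ of the normalized area of $\{\tau \le t\}$ inside the coordinate region for $\Omega$. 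Because $\tau$ is piecewise given by elementary expressions on finitely many regions, this area is a piecewise-elementary, continuous function of $t$, and differentiating yields $f$ in elementary terms.

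The main obstacle I expect is the explicit, case-by-case computation of $\tau$ over the multi-cusped section together with the attendant bookkeeping. Each transition between the finitely many closed-form branches of $\tau$ --- both the transitions between cusp-components of $\Omega$ and the internal transitions within a component where the combinatorics of ``which vector triggers the return'' changes --- produces a candidate break in $f$. The delicate part is to carry out the area computation for $\{\tau \le t\}$ across all these branches, to check that the pieces glue to a continuous $f$, and to verify that exactly seven of the transition values survive as genuine points of non-differentiability while the remaining ones yield matching one-sided derivatives. This combinatorial-geometric analysis of the octagon's cylinder decompositions, rather than any soft dynamical input, is the technical heart of the theorem.
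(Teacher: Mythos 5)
Your proposal follows essentially the same route as the paper: Athreya's Poincar\'e section of surfaces with a short horizontal saddle connection, decomposed into pieces indexed by the cusps of the Veech group, with renormalized gaps realized as return times, equidistribution of long closed horocycles (Sarnak, Dani--Smillie) giving convergence to the normalized measure of $\{\tau \geq t\}$, and the density $f$ obtained by differentiating the resulting piecewise-elementary area function. The only notable differences are cosmetic: the paper conjugates the octagon to a normalized L-shaped surface $\mathcal{L}$ (tracking how gap distributions transform under scaling) to simplify the explicit computations, parametrizes each cusp component by matrices $M_{a,b}$ rather than by your two holonomy coordinates, and makes the renormalization exact via the geodesic flow (the gaps in the ball of radius $R$ equal the return times along the orbit of $a_{-2\log R}\mathcal{L}$), which is also how it handles the fact that $\mathcal{L}$ itself is horocycle-periodic.
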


Theorem \ref{octthm} gives information on finer statistics for the set of slopes of saddle connections. In particular, it reveals an underlying structure because we see that the gap distribution is not exponential and has quadratic tail. Were the slopes of saddle connections \emph{``truly random''}, one would expect the gap distribution to be exponential, as is the case with independent identically distributed uniform random variables on $[0,1]$.

\subsection{Translation Surfaces}\label{translationsurfaces}

In this section we review the necessary background related to translation surfaces, which we use in Theorem \ref{parametrizingsection}. We refer reader to \cite{HS,Masur,MT,Zorich} for a detailed treatment of the subject. A \emph{translation surface} is a pair $(X,\omega)$ where $X$ is a closed Riemann surface of genus $g$, and $\omega$ is a holomorphic $1$-form on $X$. Equivalently, a \emph{translation surface} is a finite union of polygons $\{P_1,\dotsc P_k\}$ in the Euclidean plane such that for each edge there exists a parallel edge of the same length and these pairs are glued together by a Euclidean translation. It follows that the total angle around each vertex is equal to $2\pi c_i$ for some integer $c_i\ge1$. We note here that we are considering a fixed embedding of the above polygons into 
$\mathbb{R}^2\cong \mathbb{C}$ up to translations, that is we are considering polygons in the Euclidean plane with a preferred vertical direction. Since translations are holomorphic functions that preserve the standard one form $dz$ on the Euclidean plane, we get a Riemann surface structure on the glued
surface together with a holomorphic $1$-form. The set of vertices where the total angle $\theta>2\pi$
is
called the set of \emph{cone points}, and
each cone point corresponds
to a zero of the holomorphic $1$-form. At a zero of order $n$ the total angle is $2\pi(n+1)$.  

A \emph{saddle connection} is a geodesic connecting two of the cone points without any cone points in the interior. An oriented saddle connection $\gamma$ in $X$ determines a \emph{holonomy vector} \[
v_\gamma=\int_{\gamma}\omega
\]
which records the direction and the length of the saddle connection $\gamma$. We will denote the set of holonomy vectors corresponding to saddle connections in $(X,\omega)$ by $\Lambda_{sc}(X,\omega)$. Given a translation surface $(X,\omega)$ the set 
$\Lambda_{sc}(X,\omega)$
is a discrete subset of $\mathbb{R}^2$, with the property that $g\Lambda_{sc}(X,\omega)=\Lambda_{sc}g(X,\omega)$, see \cite{HS,Masur,Vorobets}.   

A translation surface $X$ comes with a set of topological data, \emph{the genus, the set of zeros, and the multiplicities of zeros}.  By the Gauss-Bonnet theorem the sum of the order of the zeros is equal to $2g-2$ where $g$ is the genus of the surface $X$. Therefore, the topological data can be represented by a vector $\vec{\alpha}=(\alpha_1,\dotsc,\alpha_m)$ where $\alpha_i$ is the order of the $i^{th}$ zero. We say that two translation surfaces are equivalent if there is an orientation preserving isometry between them which preserves the preferred vertical direction. Given a topological data $\vec{\alpha}$ the moduli space $\mathcal{H}(\vec{\alpha})$ of translation surfaces up to above equivalence relation is called a \emph{stratum}. 

There is a natural $SL(2,\mathbb{R})$ action on the space of translation surfaces: Given a translation surface $(X,\omega)$, which is a finite union of polygons $\{P_1,\dotsc,P_k\}$, and a matrix $A\in SL(2,\mathbb{R})$, $A\cdot(X,\omega)$ is the translation surface defined by the union of polygons $\{AP_1,\dotsc,AP_k\}$. It is easy to see that the $SL(2,\mathbb{R})$ action preserves the topological data, so it induces an action on each strata $\mathcal{H}(\vec{\alpha})$.    

The stabilizer of $(X,\omega)$ under the $SL(2,\mathbb{R})$ action on the space of translation surfaces is denoted by $SL(X,\omega)$. Equivelantly, $SL(X,\omega)$ is the the group of derivatives of the orientation preserving affine diffeomorphisms $(X,\omega)$. We will call the stabilizer $SL(X,\omega)$ the \emph{Veech group} of $(X,\omega)$. We note here that some authors call the image of $SL(X,\omega)$ in $\mathbb{P}SL(2,\mathbb{R})$, denoted by $\mathbb{P}SL(X,\omega)$, the Veech group, but we will stick to the notation of Smillie-Ulcigrai in \cite{SU}. 
A translation surface $(X,\omega)$ is called a \emph{Veech surface} if $SL(X,\omega)$ has finite covolume, i.e. $SL(2,\mathbb{R})/SL(X,\omega)$ has finite volume.


The image of the Veech group $\Gamma_{\mathcal{O}}$ of the octagon in $\mathbb{P}SL(2,\RR)$ is isomorphic to the triangle group $\Delta(4,\infty,\infty)$. The presence of multiple cusps gives rise to new computational challenges but also gives a conjectural idea of how to handle the general case where the Veech group has arbitrarily many cusps. Hence the computation of the gap distribution on $\mathcal{O}$ gives yet another step towards formulating and proving a general statement about gap distributions of saddle connections on any Veech surface. Indeed, in Theorem \ref{parametrizingsection} we give an effective way of parametrizing a Poincar\'e section to the horocycle flow on $SL(2,\RR)/SL(X,\omega)$ where the Veech group $SL(X,\omega)$ has arbitrarily many cusps. 

\subsection{Poincar\'e section for a general Veech surface}\label{generalVeech}

Let $(X,\omega)$ be a Veech surface, such that $SL(X,\omega)$ has $n<\infty$ cusps. We further assume here that the Veech group contains $-Id$; in Section \ref{poincaregeneral} we remove this assumption and prove a theorem for the general case.  It is well known that the set of saddle connections $\Lambda_{sc}(X,\omega)$ on $(X,\omega)$ can be decomposed into finitely many $SL(X,\omega)$ orbits of saddle connections, see \cite{HS}. Let $\Lambda_{sc}^{short}(X,\omega):=\sqcup_{i=1}^{n}SL(X,\omega)\cdot w_i\subset\Lambda_{sc}(X,\omega)$ be a set of shortest saddle connections in any given direction, so that any saddle connection on $(X,\omega)$ is parallel to exactly one element of $\Lambda_{sc}^{short}(X,\omega)$. Let us denote $\Lambda_{sc}^{w_i}(X,\omega)=SL(X,\omega)\cdot w_i$. Consider the set
\[
\Omega^{M}=\{gSL(X,\omega)\mid g(X,\omega)\text{ has a horizontal saddle connection of length $\le1$}\}.\]
According to \cite{Ath13}, $\Omega^{M}$ is a Poincar\'e section for the action of  horocycle flow on the moduli space $SL(2,\RR)/SL(X,\omega)$. By definition of  $\sqcup_{i=1}^{n}\Lambda_{sc}^{w_i}(X,\omega)$ we can write
\[
\Omega^{M} = \Omega_1^{M} \cup \cdots \cup \Omega_n^{M},\]
where
\[\Omega_i^{M} = \{gSL(X,\omega)\mid g\Lambda_{sc}^{w_i}(X)\cap(0,1]\neq\emptyset\},
\]
is the set of elements $gSL(X,\omega)$ in the $SL(2,\RR)$ orbit of the Veech group, such that $g\Lambda_{sc}^{w_i}(X)$ contains a horizontal saddle connection of length $\leq 1$.

Let $\Gamma_1,\dotsc,\Gamma_n$ be maximal parabolic sugroups of $SL(X,\omega)$ representing the conjugacy classes of all maximal parabolic subgroups in $SL(X,\omega)$. Each $\Gamma_i$ is isomorphic to $\mathbb{Z}\oplus\mathbb{Z}/2\mathbb{Z}$. Pick a generator $P_i$ for the infinite cyclic factor of $\Gamma_i$ with eigenvalue $1$. It follows from \cite{V89} that, each  $w_i$ in the above decomposition can be chosen so that $w_i$ is an eigenvector for $P_i$.  

Conjugate the parabolic elements $P_i\in\Gamma_i$ with an element $C_i$ in $SL(2,\RR)$ such that 
\[S_i=C_iP_iC_i^{-1} = \left[\begin{array}{cc}1 & \alpha_i \\ 0 & 1\end{array}\right].\] Moreover, $C_i$ can be chosen so that $C_i\cdot w_i=\left[\begin{array}{c}1\\0\end{array}\right]$. 
Define $M_{a,b}$ and sets $\Omega_i$ as follows
\[
M_{a,b}=\left[\begin{array}{cc} a & b \\ 0 &  a^{-1}\end{array}\right],\]
where $a\ne0 ,b$ are real numbers, and
\[\Omega_i:=\{(a,b)\in\mathbb{R}^{2}\mid 0<a\le1\ \text{and}\  1-(\alpha_i)a<b\le1\}.\]

\begin{thm}\label{parametrizingsection}
Let $(X,\omega)$ be a Veech surface such that the Veech group $SL(X,\omega)$ has $n<\infty$ cusps. There are coordinates from the section $\Omega^M$ to the set $\bigsqcup_{i=1}^{n}\Omega_i$. More precisely, the bijection between $SL(2,\RR)/SL(X,\omega)$ and $SL(2,\RR)\cdot (X,\omega)$ sends $\Omega_{i}^M$ to $\{M_{a,b}C_{i}(X,\omega)\mid(a,b)\in\Omega_i\}$, and the latter set is bijectively parametrized by $\Omega_i$. The return time function is piecewise rational with pieces defined by linear equations in these coordinates. The limiting gap distribution for any Veech surface is piecewise real analytic.
\end{thm}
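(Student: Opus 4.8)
The plan is to build the coordinates one cusp at a time, deduce the bijection from a stabilizer computation, read the return-time structure off the dynamics of holonomy vectors, and finally feed this into the renormalization dictionary to get analyticity. Throughout I work under the standing assumption that $-Id\in SL(X,\omega)$. Fix $i$ and let $gSL(X,\omega)\in\Omega_i^M$. By definition there is $h\in SL(X,\omega)$ with $ghw_i$ horizontal of length in $(0,1]$. Since $C_iw_i=e_1:=(1,0)^{T}$, the matrix $g'=ghC_i^{-1}$ satisfies $g'e_1=ghw_i$, which is horizontal; a matrix in $SL(2,\RR)$ sending $e_1$ into the horizontal line has vanishing lower-left entry, so $g'=M_{a,b}$ for some $a,b$, and the length of $g'e_1$ equals $|a|\le 1$. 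As $g'C_i=gh$ with $h\in SL(X,\omega)$, we get $gSL(X,\omega)=M_{a,b}C_iSL(X,\omega)$, so that under the orbit bijection $gSL(X,\omega)\mapsto g(X,\omega)$ the coset corresponds to the surface $M_{a,b}C_i(X,\omega)$. This already produces coordinates; it remains to pin down the range of $(a,b)$ and to prove uniqueness.

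The only ambiguity is right multiplication of $M_{a,b}C_i$ by the subgroup of $C_iSL(X,\omega)C_i^{-1}$ preserving the horizontal direction $\RR e_1$. Here I would prove the key lemma that the stabilizer of the direction of $w_i$ inside $SL(X,\omega)$ is exactly the maximal parabolic $\Gamma_i=\langle P_i,-Id\rangle$: the direction of $w_i$, being fixed by the parabolic $P_i$, is a cusp of the lattice $SL(X,\omega)$, and no hyperbolic element of a Fuchsian group fixes a cusp, so the full stabilizer of this direction is purely parabolic. Conjugating by $C_i$, the ambiguity subgroup is $C_i\Gamma_iC_i^{-1}=\langle S_i,-Id\rangle$, and the computations $M_{a,b}S_i^{k}=M_{a,\,b+k\alpha_i a}$ and $M_{a,b}(-Id)=M_{-a,-b}$ show its orbit on $(a,b)$ is $\{(\pm a,\pm(b+k\alpha_i a)):k\in\mathbb{Z}\}$ (with matching signs). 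The sign freedom normalizes to $0<a\le 1$, and then, since the $\langle S_i\rangle$-orbit of $b$ has spacing $\alpha_i a$, the half-open interval $1-\alpha_i a<b\le 1$ of length $\alpha_i a$ contains exactly one representative. Thus $\Omega_i$ parametrizes $\Omega_i^M$ bijectively, and the union over $i$ gives the coordinates on $\Omega^M$.

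For the return time I would track the horocycle flow $h_s=\left[\begin{smallmatrix}1&0\\s&1\end{smallmatrix}\right]$ on the holonomy set $M_{a,b}C_i\Lambda_{sc}(X,\omega)$. Writing $(c_1,c_2)=C_iv$ for a saddle connection $v$, the image $M_{a,b}C_iv$ has first coordinate $p_v=ac_1+bc_2$ and second coordinate $q_v=a^{-1}c_2$; under $h_s$ the horizontal component $p_v$ is preserved and $v$ becomes horizontal at time $s=-q_v/p_v$. The first return to $\Omega^M$ thus occurs at the least positive value of $-q_v/p_v$ among the $v$ with horizontal length $|p_v|\le 1$, and the return map records the cusp of the realizing $v$, landing in the corresponding $\Omega_j$. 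Discreteness of $\Lambda_{sc}$ together with the length bound cut the competition down to finitely many candidates, so the return time $R$ is the minimum of finitely many rational functions of $(a,b)$, hence piecewise rational. Moreover, since every $q_v$ carries the common factor $a^{-1}$, the equation $-q_v/p_v=-q_{v'}/p_{v'}$ reduces to $c_2(ac_1'+bc_2')=c_2'(ac_1+bc_2)$, which is linear; together with the linear length-one loci $p_v=\pm1$ and the walls of $\Omega_i$, this shows the pieces are defined by linear equations. I expect the heart of the argument, and the \emph{main obstacle}, to be the combinatorial step preceding these formulas: explicitly identifying the finite family of saddle connections that can realize the first return across all $n$ cusps and organizing them into these regions, since this is precisely where the presence of several competing parabolic subgroups $S_1,\dots,S_n$ makes the bookkeeping substantially harder than in the single-cusp examples.

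Finally, by the general framework relating slope gaps to return times of the horocycle flow (\cite{Ath13}), the limiting distribution of renormalized slope gaps is the pushforward under $R$ of the normalized transverse invariant measure $\mu$ on the section $\bigsqcup_i\Omega_i$, whose density in the coordinates $(a,b)$ is real analytic. Thus the cumulative distribution is $t\mapsto\mu(\{R\le t\})$ and $f$ is its derivative. On each piece $R$ is a fixed rational function over a polytope, so $\{R\le t\}$ is a semialgebraic region depending real-analytically on the parameter $t$; integrating the real-analytic density over it yields a real-analytic function of $t$, with breaks only at the finitely many values of $t$ where the combinatorial type of $\{R\le t\}$ changes. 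Summing over the finitely many pieces, $\mu(\{R\le t\})$, and hence $f$, is piecewise real analytic, as claimed.
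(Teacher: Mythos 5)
Where your argument runs, it is correct and it is essentially the paper's argument: extract $M_{a,b}$ from a short horizontal saddle connection in the orbit of $w_i$, normalize $b$ into $(1-\alpha_i a,1]$ by right multiplication by powers of $S_i$, and get injectivity from the observation that $M_{a,b}M_{c,d}^{-1}$ is an upper-triangular element of $C_iSL(X,\omega)C_i^{-1}$. Your explicit key lemma --- the stabilizer of the direction of $w_i$ in $SL(X,\omega)$ is exactly $\Gamma_i=\langle P_i,-Id\rangle$, because a parabolic fixed point of a Fuchsian group cannot also be fixed by a hyperbolic element --- is precisely the fact the paper uses implicitly when it concludes $a=c$ from ``$S_i$ generates the infinite cyclic factor,'' and making it explicit is a genuine improvement. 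Likewise, your computation that the switching loci between competing saddle connections are linear in $(a,b)$, and your closing argument that the cumulative distribution is an analytically varying integral of an analytic density over semialgebraic regions, are slightly more detailed versions of what the paper asserts.

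The genuine gap is your standing assumption $-Id\in SL(X,\omega)$: it excises the half of the theorem that the paper's proof (Section \ref{poincaregeneral}, Case 2 with two subcases) is largely devoted to. The theorem's hypotheses are an arbitrary Veech surface with $n<\infty$ cusps, and its last sentence claims the conclusion for \emph{any} Veech surface; only the surrounding prose of Section \ref{generalVeech} imposes $-Id\in SL(X,\omega)$, and the paper removes that assumption when it actually proves the result. The missing case is not a formality, because your normalization breaks in two ways when $-Id\notin SL(X,\omega)$: first, $-w_i$ need not lie in the $SL(X,\omega)$-orbit of $w_i$, so the sign freedom $(a,b)\mapsto(-a,-b)$ you use to force $a>0$ is unavailable; second, the generator $P_i$ of $\Gamma_i\cong\mathbb{Z}$ may have eigenvalue $-1$, in which case $S_i=\left[\begin{smallmatrix}-1&\alpha_i\\0&-1\end{smallmatrix}\right]$ acts by $(a,b)\mapsto(-a,\,a\alpha_i-b)$ and only $S_i^{\pm2}$ translates $b$, in steps of $2\alpha_i a$. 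Accordingly the parametrizing regions must change: for eigenvalue-$(+1)$ cusps $\Omega_i$ acquires a second component $\{-1\le a<0,\ 1+\alpha_i a<b\le1\}$, and for eigenvalue-$(-1)$ cusps the strip doubles to $1-2\alpha_i a<b\le1$; with your regions the parametrization would fail to be a bijection. A smaller soft spot: ``discreteness of $\Lambda_{sc}$ together with the length bound cut the competition down to finitely many candidates'' is not right as stated, since for fixed $(a,b)$ infinitely many saddle connections have horizontal component in $(0,1]$. What is true, and suffices, is that vectors in the strip with slope below any fixed bound lie in a compact set, hence are finitely many; this gives existence of the minimizer, and your linear switching loci then give the piecewise structure.
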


\begin{ack*} 
The authors are grateful to their respective advisors Chris Leininger and Ilya Kapovich, and Jayadev Athreya for their constant support, advice and encouragement. We would like to thank Samuel Leli\`evre for his willingness to answer our questions and very useful discussions. We would like to thank J.D. Quigley for his aid in creating and debugging our sage code. We would also like to thank Zeev Rudnick for helpful comments on an earlier version of this paper. In addition, we thank the anonymous referee for his/her suggestions.

\end{ack*}


\section{Veech Groups and Gaps}

\subsection{The Octagon and the normalized $L$}\label{groups}

Recall that the image of the Veech group $\Gamma_{\mathcal{O}}$ of the octagon in $\mathbb{P}SL(2,\RR)$ is isomorphic to the triangle group $\Delta(4,\infty,\infty)$. The group $\Gamma_{\mathcal{O}}$, as calculated by  Smillie-Ulcigrai in \cite{SU}, is generated by the following two matrices:

\begin{displaymath}
\rho = \left [\begin{array}{c c}\frac{1}{\sqrt{2}} & \frac{-1}{\sqrt{2}} \\ \frac{1}{\sqrt{2}} & \frac{1}{\sqrt{2}}\end{array}\right ]
\hspace{1in}
\sigma = \left [\begin{array}{c c} 1 & 2(1 + \sqrt{2}) \\ 0 & 1\end{array}\right ]
\end{displaymath}

The matrix \[
T=\left[ \begin{array}{c c} 1 & 0 \\ 0 & \sqrt{2}\end{array}\right]\left[ \begin{array}{c c} 1 & (1 + \sqrt{2}) \\ 0 & 1\end{array}\right]=\left[ \begin{array}{c c} 1 & (1 + \sqrt{2}) \\ 0 & \sqrt{2}\end{array}\right]\in GL(2,\RR)\]
transforms $\mathcal{O}$ into $\mathcal{L}$, see Figure \ref{normalL}.  
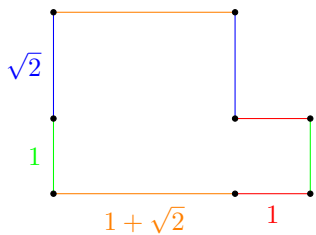
\begin{figure}[h!]
\centering
\begin{tikzpicture}
\coordinate (A) at (0,0);
\coordinate (B) at (0,2/2);
\coordinate (C) at (0,4.8282/2);
\coordinate (D) at (4.8282/2,4.8282/2);
\coordinate (E) at (4.8282/2,2/2);
\coordinate (F) at (6.8282/2,2/2);
\coordinate (G) at (6.8284/2,0);
\coordinate (H) at (4.8282/2,0);
\coordinate (I) at (0,.5);
\coordinate (J) at (6.8282/2,.5);
\draw [green] (A) -- (B) node[left=.25mm,pos=.5,color=green]{\small $1$};
\draw [blue] (B) -- (C) node[left=.25mm,pos=.5,color=blue]{\small $\sqrt{2}$};
\draw [orange](A) -- (H) node[below=.25mm,pos=.5,color=orange]{\small $1+\sqrt{2}$};
\draw [red](H) -- (G) node[below=.25mm,pos=.5,color=red]{\small $1$};
 \draw [orange] (C) -- (D);
 \draw [blue] (D) -- (E);
 \draw [red] (E) -- (F);
 \draw [green] (F) -- (G);
    \draw[fill] (B) circle (1pt);
    \draw[fill] (C) circle (1pt);
    \draw[fill] (D) circle (1pt);
    \draw[fill] (E) circle (1pt);
    \draw[fill] (F) circle (1pt);
        \draw[fill] (A) circle (1pt);
    \draw[fill] (G) circle (1pt);
    \draw[fill] (H) circle (1pt);
\end{tikzpicture}
\caption{The Veech surface $\mathcal{L}$}\label{normalL}
\end{figure}
We carry out the computations for the Veech surface $\mathcal{L}$.

Note that if the limiting gap distribution of a translation surface $X$ is given by
\[\lim_{R\rightarrow 0} \frac{1}{N(R)}|\mathbb{G}_X^R\cap (a,b)| = \int_a^b f(x) dx\]
then the limiting gap distribution for $cX$, with $c \in \RR$ will be given by
\[\lim_{R\rightarrow \infty}\frac{1}{N(cR)}|\mathbb{G}_{cX}^{cR} \cap (a,b)| = \frac{1}{c^4}\int_a^b f\left(\frac{x}{c^2}\right)dx.\]
In order to see this, we observe that scaling does not change the slopes, hence we have
\[\mathbb{S}_X^R = \mathbb{S}_{cX}^{cR} = \left\{0 \leq s_0^R < s_1^R < \cdots < s_{N(R)-1}^R\right\}\]
where we now have to consider slopes in a ball of radius $cR$ about the origin. Thus the set of slope gaps is renormalized by $(cR)^2$ so,
\[\mathbb{G}_{cX}^{cR} = \left\{(cR)^2(s_i^R - s_{i-1}^R : 1 \leq N(R) - 1, s_i \in \mathbb{S}_X^R\right\}.\]
Therefore we are interested in computing
\begin{align*}\lim_{R \rightarrow \infty} \frac{1}{N(cR)} \left|\mathbb{G}_{cX}^{cR} \cap (a,b)\right| &= \lim_{R\rightarrow \infty} \frac{1}{N(cR)}\cdot \frac{N(R)}{N(R)}\left|\{c^2R^2(s_i^R-s_{i-1}^R)\} \cap (a,b)\right|\\
&= \lim_{R\rightarrow\infty} \frac{1}{N(cR)}\cdot \frac{N(R)}{N(R)} \left|\{R^2(s_i^R-s_{i-1}^R)\}\cap(a/c^2, b/c^2)\right|\\
&=\frac{1}{c^2}\int_{a/c^2}^{b/c^2} f(x) dx
\end{align*}
Applying a change of variables then yields the desired result.

We can write the matrix $T$, which transforms $\mathcal{O}$ to $\mathcal{L}$, as $cg$, where $g \in SL(2, \mathbb{R})$, since
\[
2^{1/4}\left[\begin{array}{cc}\frac{1}{2^{1/4}} & 0 \\ 0 & \frac{\sqrt{2}}{2^{1/4}}\end{array}\right]=\left[\begin{array}{cc}1 & 0 \\ 0 & \sqrt{2}\end{array}\right].
\]
As we show in Section \ref{poincare}, the distribution of gaps is preserved under the $SL(2,\RR)$ action, hence the limiting gap distribution for $\mathcal{O}$ and $\mathcal{L}$ is related by the aforementioned formula with $c = \frac{1}{2^{1/4}}$. 

Conjugating the Veech group $\Gamma_{\mathcal{O}}$ of the octagon by the transformation $T$,  gives the following generators for the Veech group $\Gamma$ of $\mathcal{L}$:

\begin{displaymath}
R = \left [\begin{array}{c c} 1+\sqrt{2} & -(2 + \sqrt{2}) \\ \ 1 & -1  \end{array}\right ]
\hspace{1in}
S = \left [\begin{array}{c c} 1 & 2 + \sqrt{2} \\ 0 & 1\end{array}\right ]
\end{displaymath}
where $R$ is a finite order elliptic element and $S$ is an infinite order parabolic element. By building a fundamental domain, we also see that $S$ generates the stabilizier of the infinity in $\mathbb{P}SL(\mathcal{L})$.  

\subsection{Gap Distributions}
Let $\mathbb{S}_{\mathcal{L}}^R := \left\{0 = s_0^R < s_1^R < s_2^R < \cdots < s_{N(R)-1}^R\right\}$ be the set of slopes of saddle connections on the translation surface $\mathcal{L}$ that lie in the first quadrant and in the  ball of radius $R$ around the origin with respect to $\ell_{\infty}$ metric. Let \[
\mathbb{G}_{\mathcal{L}}^R := \left\{R^{2}(s_{i}^R - s_{i-1}^R) : 1 \leq i \leq N(R)-1, s_i \in \mathbb{S}_{\mathcal{L}}^R\right\}
\] be the set of renormalized slope gaps. Recall that the main goal of this paper to compute the following limit: 
\[
\lim_{R\rightarrow \infty} \frac{|\mathbb{G}_{\mathcal{L}}^R \cap (a,b)|}{N(R)}.
\]

To evaluate this limit we translate the gap distribution question into one about the return time of the horocycle flow to a Poincar\'e section. For the purpose of this paper, horocycle flow will be defined by the action of the subgroup
\[\left\{h_s = \left[\begin{array}{cc}1 & 0 \\ -s & 1\end{array}\right] : s \in \mathbb{R}\right\}.\]

This definition was chosen so that it acts on slopes by translations. That is, if we consider a vector $v$ with slope $\sigma$, then the slope of $h_s\cdot v$ will be $\sigma-s$.
The key step in the strategy is the construction of an appropriate Poincar\'e section for the horocycle flow.
That is, a set $\Omega$ such that under the horocycle flow the orbit of almost every point in moduli space intersects $\Omega$ in a countable discrete set of times. The cross section we choose is the set of translation surfaces in the $SL(2,\RR)$ orbit of the $\mathcal{L}$ with a short horizontal saddle connection. If we apply horocycle flow to one of these surfaces, we see that the next saddle connection to become horizontal, is the saddle connection with short horizontal component and smallest slope, and the return time to the Poincar\'e section is exactly its slope.

\section{Moduli Space, Poincar\'e Section, Computations}

We now prove Theorem \ref{octthm} generalizing a strategy used by Athreya, Cheung in \cite{AC13} and Athreya, Chaika, Leli\`evre in \cite{ACL}.

\subsection{Poincar\'e Section}\label{poincare}

It is well-known \cite{HS} that for a Veech surface $(X,\omega)$ the set $\Lambda_{sc}(X,w)$ of saddle connections can be written as a union of finitely many disjoint $SL(X,\omega)$ orbits of saddle connections. In other words, there are finitely many vectors $v_1,\dotsc,v_n\in\mathbb{R}^2$ such that \[
\Lambda_{sc}(X,\omega)=\bigcup_{i=1}^{n} SL(X,\omega)v_i.
\]
In our case, where the Veech surface $(X,\omega)$ is $\mathcal{L}$, we have 4 disjoint orbits:\[
\Lambda_{sc}(\mathcal{L})=\Gamma\left[\begin{array}{cc}1\\0\end{array}\right] \cup \Gamma\left[\begin{array}{cc}\sqrt{2}+1\\0\end{array}\right]\cup \Gamma\left[\begin{array}{cc}0\\1\end{array}\right]\cup\Gamma \left[\begin{array}{cc}0\\\sqrt{2}\end{array}\right]
\]
where the first two orbits and the last two orbits are pairwise parallel. Therefore, it suffices to consider only shorter orbits:
\[
\Lambda_{sc}^{h}(\mathcal{L}):=\Gamma\left[\begin{array}{cc}1\\0\end{array}\right],\  \Lambda_{sc}^{v}(\mathcal{L}):=\Gamma\left[\begin{array}{cc}0\\1\end{array}\right].
\]

It follows from the work of Athreya \cite{Ath13}, that the set
$\Omega^{M} =\{g\Gamma\mid g\Lambda_{sc}(\mathcal{L})\cap(0,1]\neq\emptyset\}$  is a Poincar\'e section for the action of the horocycle flow $h_s$ on the space $M=SL(2,\mathbb{R})\big/ \Gamma$. 
The observation above allows us to write $\Omega^{M}=\Omega_{h}^{M}\cup\Omega_{v}^{M}$ where
\begin{align*}
&\Omega_{h}^{M}=\{g\Gamma\mid g\Lambda_{sc}^{h}(\mathcal{L})\cap(0,1]\neq\emptyset\}\\
&\Omega_{v}^{M}=\{g\Gamma\mid g\Lambda_{sc}^{v}(\mathcal{L})\cap(0,1]\neq\emptyset\}. 
\end{align*} 

For a Veech surface $(X,\omega)$ the $SL(2,\RR)$ orbit of $(X,\omega)$ can be identified with the moduli space $SL(2,\RR)/SL(X,\omega)$. In what follows, and in Section \ref{poincaregeneral}, we will implicitly use this identification. Hence, in this setting, $\Omega^{M}=\Omega_{h}^{M}\cup\Omega_{v}^{M}$ is the set of translation surfaces in the $SL(2,\mathbb{R})$ orbit of $\mathcal{L}$ with a horizontal saddle connection of length $\le1$.

Now, we will give a parametrization of the section $\Omega^{M}$ in $\mathbb{R}^2$. We first need the following matrix: Let \[
M_{a,b}=\left[\begin{array}{cc} a & b \\ 0 &  a^{-1}\end{array}\right],\]
where $a\ne0 ,b$ are real numbers. 

Next define the following subsets of $\mathbb{R}^{2}$:
\begin{align*}
&\Omega_1=\{(a,b)\in\mathbb{R}^{2}\mid 0<a\le1\ \text{and}\  1-(\sqrt{2}+2)a<b\le1\} \\
&\Omega_2=\{(a,b)\in\mathbb{R}^{2}\mid 0<a\le1\ \text{and}\ 1-a<b\le1\}
\end{align*}

Let $\mathcal{L}^{R}=R_{\pi/2}\mathcal{L}=\left[\begin{smallmatrix} 0 & 1 \\ {-1} & 0\end{smallmatrix}\right]\mathcal{L}$ be the translation surface obtained from $\mathcal{L}$ by a rotation by $\pi/2$ in the clockwise direction. The Veech group $\Gamma_{R}$ of $\mathcal{L}^{R}$ can be obtained by conjugating the Veech group of $\mathcal{L}$. Hence $\Gamma_{R}$ is generated by

\begin{displaymath}
R_{\pi/2}(R)R^{-1}_{\pi/2} = \left [\begin{array}{c c} 1+\sqrt{2} & -1 \\ \ 2+\sqrt{2} & -1  \end{array}\right ]
\hspace{1in}
R_{\pi/2}(-RS^{-1})R^{-1}_{\pi/2} = \left [\begin{array}{c c} 1 & 1 \\ 0 & 1\end{array}\right ].
\end{displaymath}

\begin{thm}\label{Lgaps} There are coordinates from the section $\Omega_{h}^M\cup\Omega_{v}^{M}$ to the set $\Omega_1\cup\Omega_2$. More precisely, the canonical bijection  $\Theta:SL(2,\RR)/\Gamma\to SL(2,\RR)\cdot\mathcal{L}$ sends $\Omega_{h}^M$ to $\{M_{a,b}\mathcal{L}\mid(a,b)\in\Omega_1\}$ and $\Omega_{v}^M$ to $\{M_{a,b}\mathcal{L}^R\mid (a,b)\in\Omega_2\}$, and the latter sets are bijectively parametrized by $\Omega_1$ and $\Omega_2$. In these coordinates, the return time function $R:\Omega_1\cup\Omega_2\to\mathbb{R}_{>0}$, defined by 
\[
R(a,b)=\left\{\begin{array}{ll} \min\{s\mid h_sM_{a,b}\mathcal{L}\in\Omega_{h}^M\cup\Omega_{v}^M\} & \text{for $(a,b)\in\Omega_1$}\\\min\{s\mid h_sM_{a,b}\mathcal{L}^R\in\Omega_{h}^M\cup\Omega_{v}^M\} & \text{for $(a,b)\in\Omega_2$}\end{array}\right.
\] is a piecewise rational function with five pieces, which is uniformly bounded below by $1$. The return map $T:\Omega_1\cup\Omega_2\to\Omega_1\cup\Omega_2$ is a measure preserving bijection and it is piecewise linear with countably many pieces.    
\end{thm}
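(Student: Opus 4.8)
The plan is to follow the cross-section strategy of Athreya--Cheung and Athreya--Chaika--Leli\`evre in three stages: parametrize the section, compute the return time, and compute the return map. For the parametrization, I would first show that a coset $g\Gamma\in\Omega_h^M$ is sent by $\Theta$ to some $M_{a,b}\mathcal{L}$. Indeed, if $g\mathcal{L}$ has a horizontal saddle connection of length $a\le 1$ coming from $\Lambda_{sc}^{h}(\mathcal{L})=\Gamma\left[\begin{smallmatrix}1\\0\end{smallmatrix}\right]$, then there is $\gamma\in\Gamma$ with $g\gamma\left[\begin{smallmatrix}1\\0\end{smallmatrix}\right]=\left[\begin{smallmatrix}a\\0\end{smallmatrix}\right]$; any element of $SL(2,\RR)$ carrying $\left[\begin{smallmatrix}1\\0\end{smallmatrix}\right]$ to $\left[\begin{smallmatrix}a\\0\end{smallmatrix}\right]$ is upper triangular, hence equals $M_{a,b}$ for some $b$, so that $\Theta(g\Gamma)=g\mathcal{L}=M_{a,b}\gamma^{-1}\mathcal{L}=M_{a,b}\mathcal{L}$. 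The only remaining ambiguity is the choice of $\gamma$ up to the stabilizer of the horizontal direction, generated by $S$ (together with $-Id$). Since $M_{a,b}S=M_{a,\,b+(2+\sqrt2)a}$, this produces the identification $(a,b)\sim(a,b+(2+\sqrt2)a)$, whose fundamental strip is exactly $\Omega_1$; the width $(2+\sqrt2)$ is the translation length of $S$. For the vertical orbit $\Lambda_{sc}^{v}(\mathcal{L})$ I would rotate by $\pi/2$ to replace $\mathcal{L}$ by $\mathcal{L}^{R}$ and $\Gamma$ by $\Gamma_R$, whose relevant cusp is generated by $\left[\begin{smallmatrix}1&1\\0&1\end{smallmatrix}\right]$; the same argument gives $\Omega_2$, with strip width $1$. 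This yields the claimed bijections.

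For the return time, observe that the horocycle flow sends a holonomy vector $\left[\begin{smallmatrix}x\\y\end{smallmatrix}\right]$ to $\left[\begin{smallmatrix}x\\y-sx\end{smallmatrix}\right]$, so it fixes the horizontal component and makes the vector horizontal exactly at $s=y/x$, the slope. Hence the set of vectors with $x\in(0,1]$ is preserved by the flow, and the first return time to $\Omega^{M}$ is
\[
R(a,b)=\min\left\{\tfrac{y}{x}\ :\ \left[\begin{smallmatrix}x\\y\end{smallmatrix}\right]\in M_{a,b}\Lambda_{sc}(\mathcal{L}),\ 0<x\le1,\ y>0\right\}.
\]
Writing $M_{a,b}\left[\begin{smallmatrix}p\\q\end{smallmatrix}\right]=\left[\begin{smallmatrix}ap+bq\\a^{-1}q\end{smallmatrix}\right]$, each candidate slope equals $q/(a^{2}p+abq)$, a rational function of $(a,b)$. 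I would then enumerate the finitely many saddle connections $\left[\begin{smallmatrix}p\\q\end{smallmatrix}\right]\in\Lambda_{sc}(\mathcal{L})$ that can realize this minimum over $\Omega_1\cup\Omega_2$, using the generators $R,S$ and the two cusps to list the ``adjacent'' vectors, and partition $\Omega_1\cup\Omega_2$ according to which candidate wins. This partition has five regions, on each of which $R$ is the corresponding rational function, and inspecting the explicit formulas region by region yields the lower bound $R\ge1$.

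For the return map, on a region where the minimizing vector is $M_{a,b}\gamma\left[\begin{smallmatrix}1\\0\end{smallmatrix}\right]$ with $\gamma=\left[\begin{smallmatrix}p&r\\q&t\end{smallmatrix}\right]\in\Gamma$, the returned point is $h_{R(a,b)}M_{a,b}\gamma\mathcal{L}$. A direct computation shows $h_{R(a,b)}M_{a,b}\gamma=M_{a',b'}$ with $a'=pa+qb$ and $b'=ra+tb$, since the lower-left entry is forced to vanish precisely at $s=R(a,b)$. Thus $T(a,b)=(pa+qb,\,ra+tb)$ is linear on each region, with Jacobian matrix $\left[\begin{smallmatrix}p&q\\r&t\end{smallmatrix}\right]$ of determinant $\det\gamma=1$; hence $T$ preserves Lebesgue measure $da\,db$, and as there are countably many possible $\gamma$ it is piecewise linear with countably many pieces. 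That $T$ is a measure-preserving bijection also follows abstractly from $h_s$ preserving Haar measure and $\Omega^{M}$ being a cross-section, after one folds the returned point back into the fundamental strips $\Omega_1,\Omega_2$ by the appropriate cusp stabilizer.

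The delicate step is the explicit geometric enumeration in the return-time stage: identifying exactly which saddle connections of $\mathcal{L}$ can be the next to become horizontal, proving the minimum is realized by this finite list over all of $\Omega_1\cup\Omega_2$, and verifying that the resulting partition has precisely five pieces. This requires a careful hands-on analysis of $\Lambda_{sc}(\mathcal{L})$ and correct bookkeeping of how the two cusps --- equivalently, the $\mathcal{L}$ and $\mathcal{L}^{R}$ pictures --- interact, since the return map moves points between $\Omega_1$ and $\Omega_2$.
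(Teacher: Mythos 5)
Your overall strategy coincides with the paper's, and the parts you carry out are correct: the parametrization of $\Omega_h^M$ and $\Omega_v^M$ (transitivity of $\Gamma$ on each orbit of saddle connections, the fact that the matrices carrying $\left[\begin{smallmatrix}1\\0\end{smallmatrix}\right]$ to $\left[\begin{smallmatrix}a\\0\end{smallmatrix}\right]$ are exactly the $M_{a,b}$, and folding $b$ into a fundamental strip via the cusp stabilizer $\langle S,-Id\rangle$, respectively via $\left[\begin{smallmatrix}1&1\\0&1\end{smallmatrix}\right]$ after rotating to $\mathcal{L}^R$) is the paper's argument almost verbatim, including the injectivity step. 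Likewise your return-map computation --- $h_{R(a,b)}M_{a,b}\gamma=M_{a',b'}$ with $(a',b')$ depending linearly on $(a,b)$ with determinant-one Jacobian, followed by a measure-preserving shear folding the point back into $\Omega_1\cup\Omega_2$ --- is exactly the mechanism behind the paper's explicit formulas $T_{AH},T_{BD},T_{HF},T_{AE},T_{\Omega_2}$ with their floor-function constants, and it correctly accounts for the transitions between the two cusps.

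There is, however, a genuine gap precisely where you flag ``the delicate step'': the finite enumeration of candidate saddle connections, the claim that the resulting partition has exactly five pieces, and the bound $R\ge1$ are asserted rather than established, and these are the quantitative heart of the theorem. The paper does this concretely: on the L-shaped table it singles out four saddle connections $AH$, $BD$, $HF$, $AE$ with holonomies $\left[\begin{smallmatrix}0\\1\end{smallmatrix}\right]$, $\left[\begin{smallmatrix}1\\1\end{smallmatrix}\right]$, $\left[\begin{smallmatrix}1+\sqrt{2}\\\sqrt{2}\end{smallmatrix}\right]$, $\left[\begin{smallmatrix}1+\sqrt{2}\\1\end{smallmatrix}\right]$, determines for each the set of $(a,b)\in\Omega_1$ where its $M_{a,b}$-image has horizontal component in $(0,1]$, and on each overlap compares the explicit slopes $\frac{1}{ab}$, $\frac{1}{a(a+b)}$, $\frac{\sqrt{2}}{a(a(1+\sqrt{2})+b\sqrt{2})}$, $\frac{1}{a(a(1+\sqrt{2})+b)}$ to decide which achieves the minimum; separately it checks that $\Omega_2$ needs no subdivision because the short vertical of $\mathcal{L}^R$ always wins there. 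Your proposal offers no mechanism for generating the candidate list or certifying its completeness (``using the generators $R,S$ and the two cusps to list the adjacent vectors'' is not an argument), and without the explicit regional formulas neither the count of five pieces nor the uniform lower bound $R\ge1$ follows. To complete the proof you would need to supply at least what the paper does: exhibit the candidates, verify by direct computation that no other saddle connection can realize the minimum on $\Omega_1\cup\Omega_2$, and read off the five rational pieces and the bound from the resulting formulas.
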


\begin{figure}[h!]
\centering
\begin{minipage}{.3\textwidth}
  \centering
  \includegraphics[width=.55\linewidth]{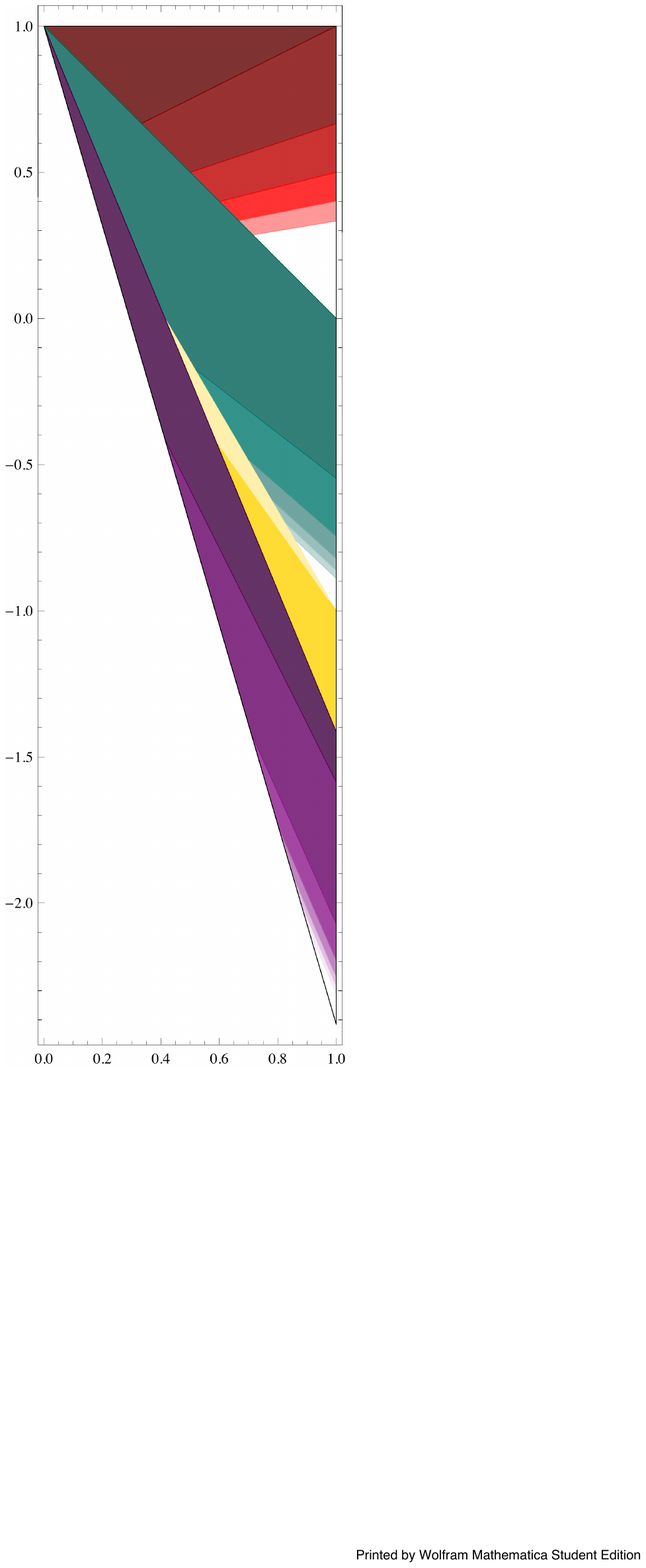}
  \label{O1S}
\end{minipage}%
\begin{minipage}{.3\textwidth}
  \centering
  \includegraphics[width=.55\linewidth]{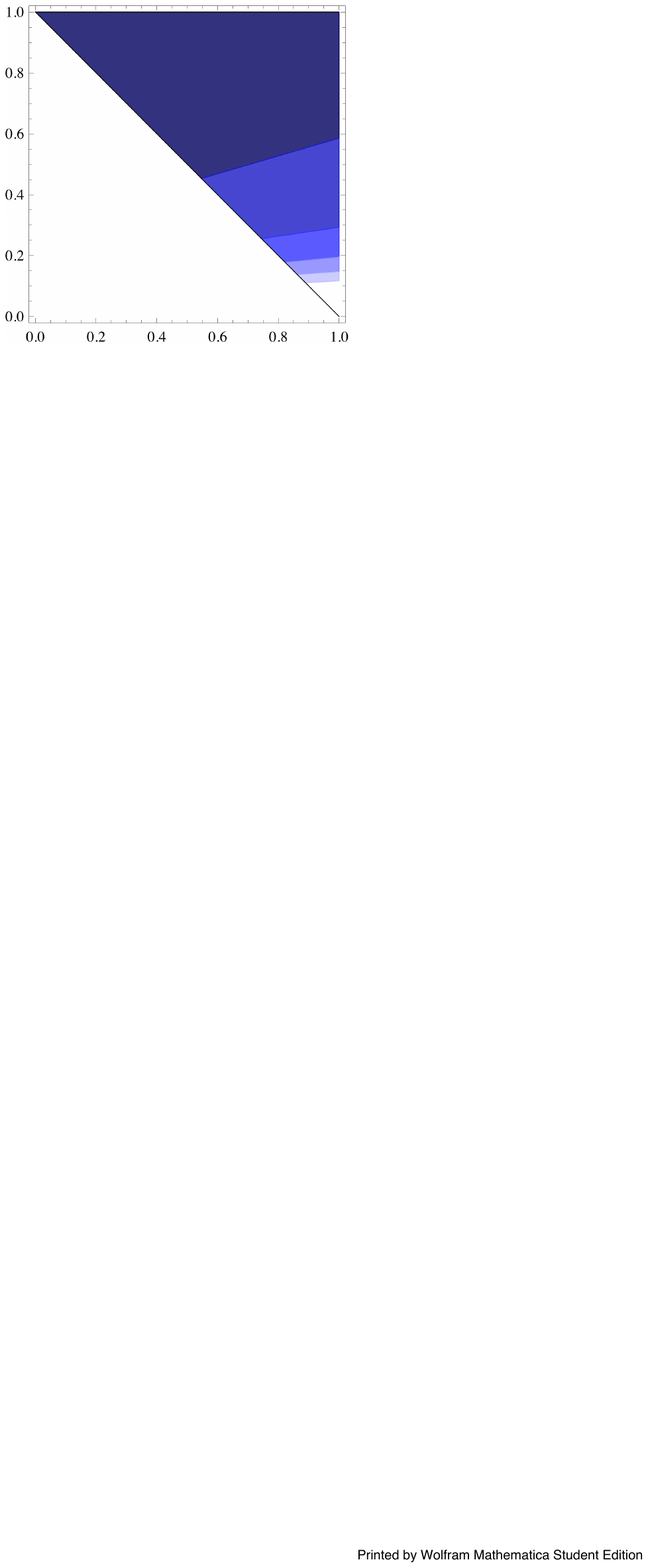}
  \label{O2S}
\end{minipage}
\caption{Source regions for the return map. Colors represent finer subdivisions corresponding to different values of the constant in each equation. White areas indicate infinitely many subdivisions following the same pattern as the others in the region.}
\label{src}
\end{figure}

\begin{figure}[h!]
\centering
\begin{minipage}{.3\textwidth}
  \centering
  \includegraphics[width=.55\linewidth]{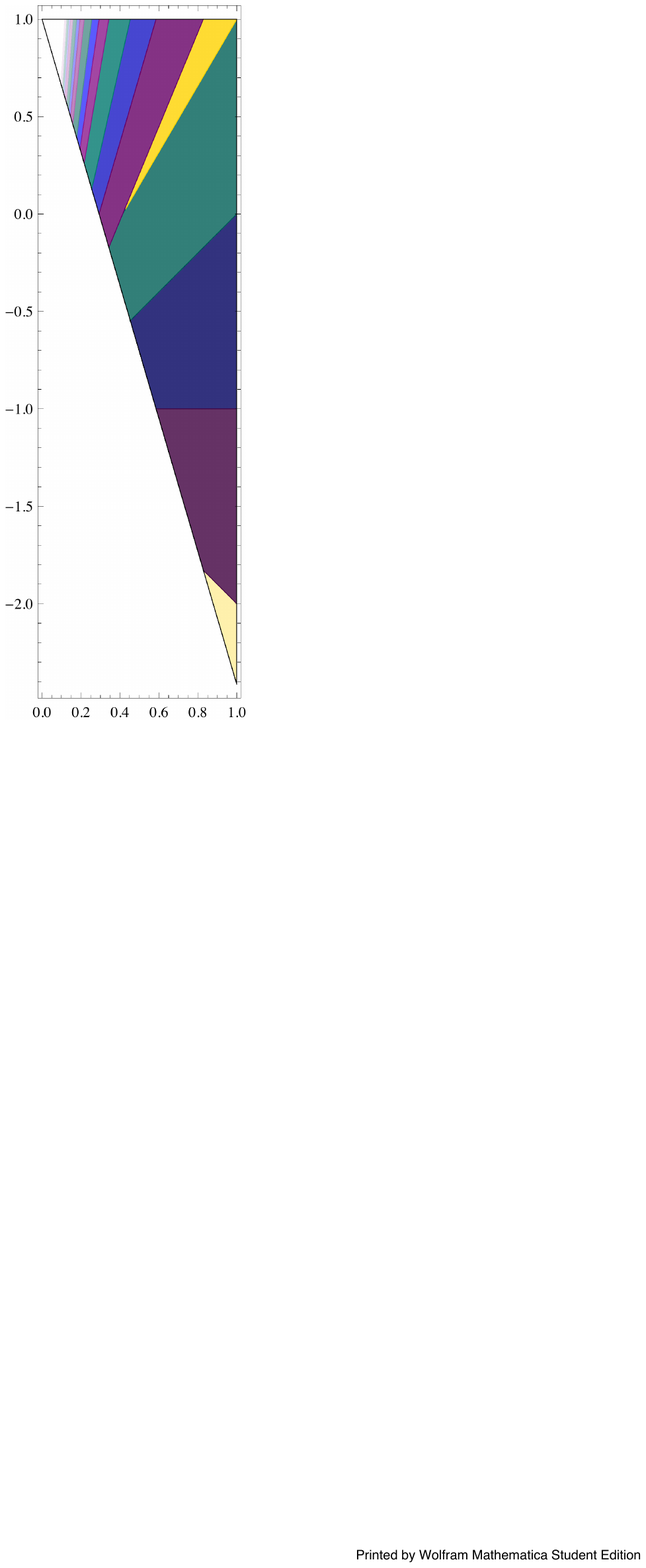}
  \label{O1T}
\end{minipage}%
\begin{minipage}{.3\textwidth}
  \centering
  \includegraphics[width=.55\linewidth]{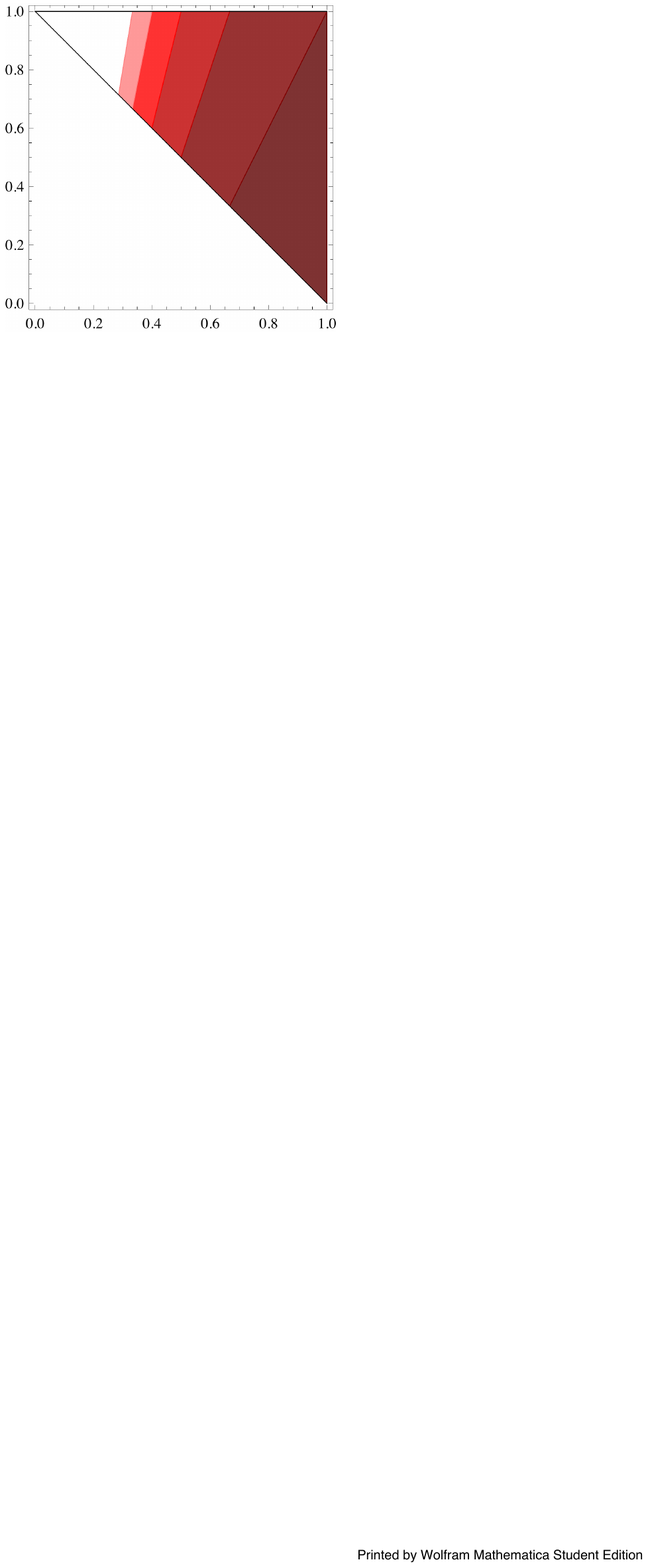}
  \label{O2T}
\end{minipage}
\caption{Target regions for the return map. Each colored region from the source maps to the same colored region in the target.}
\label{trgt}
\end{figure}

\begin{proof}  Suppose that $g\Gamma\in\Omega_{h}^{M}$. We need to show that $g\mathcal{L}=M_{a,b}\mathcal{L}$ for some $(a,b)\in\Omega_1$. By definition $g\mathcal{L}$ has a horizontal saddle connection $\left[\begin{array}{cc}a\\0\end{array}\right]$ such that 
\[
\left[\begin{array}{cc}a\\0\end{array}\right] =g\gamma\left[\begin{array}{cc}1\\0\end{array}\right]
\] for some $\gamma\in\Gamma,$ and $0<a\le1$.
Since $\Gamma$ acts on $\Lambda_{sc}^{h}(\mathcal{L})$ transitively and for any element $\gamma\in\Gamma$ we have $g\gamma \mathcal{L}$=$g\mathcal{L}$, we can assume that
\[
\left[\begin{array}{cc}a\\0\end{array}\right]=g\left[\begin{array}{cc}1\\0\end{array}\right].
\]
The elements in $SL(2,\RR)$ which take $\left[\begin{array}{cc}1\\0\end{array}\right]$ to $\left[\begin{array}{cc}a\\0\end{array}\right]$ are precisely of the form \[
M_{a,b}=\left [\begin{array}{c c} a & b \\ 0 & a^{-1}\end{array}\right].
\]
Thus, every element in  $\Omega_{h}^M$ can be written as $M_{a,b}\mathcal{L}$ for some $(a,b)$ where $0<a\le1$. 
Since the element $S=\left [\begin{array}{c c} 1 & 2 + \sqrt{2} \\ 0 & 1\end{array}\right]$ is in the Veech group $\Gamma$, and \[
M_{a,b}S^{-n}=\left[\begin{array}{c c} a & b-(2+\sqrt{2})an \\ 0 & a^{-1}\end{array}\right],
\]
we can write every element $g\mathcal{L}$ in $\Omega_{h}^M$ as $M_{a,b}\mathcal{L}$ where $(a,b)\in\Omega_1$. 

For any point $(a,b)\in\Omega_1$, it is clear that $M_{a,b}\mathcal{L}$ has a short horizontal, the image of the saddle connection $\left[\begin{array}{cc}1\\0\end{array}\right]$.

Finally, we prove that $\Omega_1$ bijectively parametrizes $\{M_{a,b}\mathcal{L}\mid (a,b)\in\Omega_1\}$. We suppose that $(a,b),(c,d)\in\Omega_1$ and that $M_{a,b}\mathcal{L}=M_{c,d}\mathcal{L}$, and must prove that $(a,b)=(c,d)$. We have \[
M_{a,b}M_{c,d}^{-1} = \left[\begin{array}{cc}\frac{a}{c} & bc-ad \\ 0 & \frac{c}{a}\end{array}\right] \in \Gamma,
\] which fixes infinity in the upper half-plane. As we noted in Section \ref{groups}, the stabilizer of infinity in $PSL(\mathcal{L})$ is the infinite cylic group generated by the matrix \[
S=\left [\begin{array}{c c} 1 & 2 + \sqrt{2} \\ 0 & 1\end{array}\right].\]
Since $a/c>0$ it follows that $a=c$. From this we see that $1-(2+\sqrt{2})a < b,d \leq 1$, and $(b-d)a = k (2+\sqrt{2})$, which implies $b = d$, and hence $(a,b) = (c,d)$, as required.\\
 
Now suppose that $g\Gamma\in\Omega_v^{M}$. We need to show that $g\mathcal{L}=M_{a,b}\mathcal{L}^{R}$ for some $(a,b)\in\Omega_2$. By definition, there is a horizontal saddle connection $\left[\begin{array}{cc}a\\0\end{array}\right]$ on $g\mathcal{L}$ such that \[
\left[\begin{array}{cc}a\\0\end{array}\right]=g\gamma\left[\begin{array}{cc}0\\1\end{array}\right].\]
As in the previous case, we can assume that \[
\left[\begin{array}{cc}a\\0\end{array}\right]=g\left[\begin{array}{cc}0\\1\end{array}\right].\]
This implies that \[
R_{\pi/2}^{-1}g\left[\begin{array}{cc}0\\1\end{array}\right]=\left[\begin{array}{cc}0\\a\end{array}\right].\]
The elements in $SL(2,\RR)$ that take $\left[\begin{array}{cc}0\\1\end{array}\right]$ to $\left[\begin{array}{cc}0\\a\end{array}\right]$ are precisely of the form $\left[\begin{array}{c c} a^{-1} & 0 \\ b & a\end{array}\right]$. This means that 
\[
g\mathcal{L}=R_{\pi/2}\left[\begin{array}{c c} a^{-1} & 0 \\ b & a\end{array}\right]\mathcal{L}=R_{\pi/2}\left[\begin{array}{c c} a^{-1} & 0 \\ b & a\end{array}\right]R_{\pi/2}^{-1}R_{\pi/2}\mathcal{L}=\left[\begin{array}{c c} a & -b \\ 0 & a^{-1}\end{array}\right]\mathcal{L}^{R},
\] 
which implies that every element $g\mathcal{L}\in\Omega_{2}^{M}$ can be written as $M_{a,b}\mathcal{L}^{R}$ for $0<a\le1$. Observe that the matrix \[
R_{\pi/2}(-RS^{-1})R^{-1}_{\pi/2}=\left[\begin{array}{c c} 1 & 1 \\ 0 & 1\end{array}\right]\in\Gamma_{R}=R_{\pi/2}\Gamma R^{-1}_{\pi/2},
\] the Veech group of $\mathcal{L}^{R}$. Thus, arguing as in the first case,  we can write every element $g\mathcal{L}\in\Omega_{2}^{M}$ as $M_{a,b}\mathcal{L}^{R}$ for $(a,b)\in\Omega_2$. The bijection between $\Omega_2$ and $\{M_{a',b'}\mathcal{L}^R\mid (a',b')\in\Omega_2\}$ now follows from a similar argument as in the case of $\Omega_1$. 

We now compute the return time function $R:\Omega_1\cup\Omega_2\to\RR_{>0}$ explicitly. Recall that for a given point $M_{a,b}\mathcal{L}$ the return time to the section under the horocycle flow is given by the smallest positive slope of the saddle connection on $M_{a,b}\mathcal{L}$ with horizontal component $\le1$. Therefore we must determine the associated slope for each point $(a,b) \in \Omega$. This requires finding the original saddle connection whose image has the smallest positive slope.

On the original $\mathcal{L}$ there are 4 different saddle connections of interest, shown in Figure \ref{sc}. We determined the set of points $(a,b)$ such that under the matrix $M_{a,b}$ each of the four saddle connections of interest has horizontal component less than $1$ and greater than $0$, in other words, that it is a candidate to have the smallest slope. This divides $\Omega_1$ into four subregions, these are shown in Figure \ref{subregions}.
\begin{figure}[h!]
\begin{tikzpicture}
\coordinate (A) at (0,0);
\coordinate (B) at (0,2);
\coordinate (C) at (0,4.8284);
\coordinate (D) at (4.8284,4.8284);
\coordinate (E) at (4.8284,2);
\coordinate (F) at (6.8284,2);
\coordinate (G) at (6.8284,0);
\coordinate (H) at (4.8284,0);
\draw (B) -- (C) node[left=.25mm,pos=1]{\small $G$} -- (D) -- (E) -- (F) -- (G) -- (H) node[right=.25mm,pos=0]{\small $C$} -- (A);
\draw [AH] (A) -- (B);
\draw [BD] (H) -- (F) node[below=.25mm,pos=0,color=black]{\small $B$} node[right=.25mm,pos=1,color=black]{\small $D$};
\draw [HF] (B) -- (4.8284, 4.8284) node[left=.25mm,pos=0,color=black]{\small $H$} node[right=.25mm,pos=1,color=black]{\small $F$};
\draw[AE] (A) -- (4.8284, 2) node[above=-.75mm,pos=1.06,color=black]{\small $E$};
    \draw[fill] (B) circle (1pt);
    \draw[fill] (C) circle (1pt);
    \draw[fill] (D) circle (1pt);
    \draw[fill] (E) circle (1pt);
    \draw[fill] (F) circle (1pt);
        \draw[fill] (A) circle (1pt) node[left=.25mm,pos=0]{\small $A$};
    \draw[fill] (G) circle (1pt);
    \draw[fill] (H) circle (1pt);
\end{tikzpicture}
\caption{The saddle connections of interest}
\label{sc}
\end{figure}
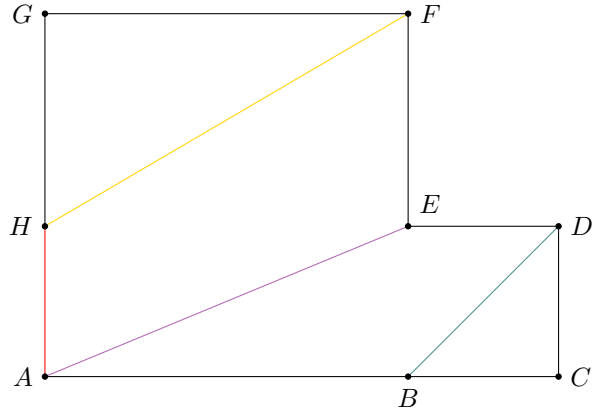

\begin{figure}[h!]

\centering
\includegraphics[scale=0.75]{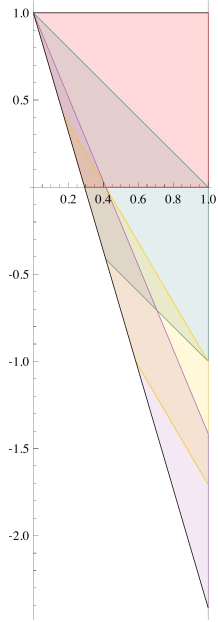}
\caption{Subregions}
\label{subregions}
\end{figure}
 
On the overlap of the red and green regions, the saddle connections coming from $AH$ and $BD$ are possible candidates. To check which one dominates, we consider their slopes. The slope of $M_{a,b}\cdot AH$ is $\frac{1}{ab}$ and the slope of  $M_{a,b}\cdot BD$ is $\frac{1}{a(a+b)}$. If $a,b > 0$, we have $a+b > b$ so $a(a+b) > ab$ and therefore $\frac{1}{a(a+b)}< \frac{1}{ab}$ which implies that the image of $BD$ dominates the image of $AH$, so $AH$ must end where $BD$ appears, at the line $b = 1-a$. 

On the overlap of the green and yellow regions, we see the saddle connections coming from $BD$ and $HF$. Again comparing slopes, we have the slope of $M_{a,b}\cdot HF$ is $\frac{\sqrt{2}}{a(a(\sqrt{2}+1)+b\sqrt{2})}$. Since
\[a>0, a + b + \frac{a}{\sqrt{2}} > a + b \text{ and so } \frac{1}{a\left(a+b+\frac{a}{\sqrt{2}}\right)} < \frac{1}{a(a+b)},\]
 therefore $HF$ dominates $BD$.

On the overlap of the green and purple regions, we see the saddle connections coming from $BD$ and $AE$. The slope of $M_{a,b}\cdot AE$ is $\frac{1}{a((1+\sqrt{2})a + b)}$. Since $a>0$, we have
\[a + \sqrt{2}a + b > a + b \text{ so } \frac{1}{a((1+\sqrt{2})a+b)} < \frac{1}{a(a+b)},\]
 thus $AE$ dominates $BD$. 

On the overlap of purple and yellow we see the saddle connections coming from $AE$ and $HF$. Since $a > 0$, we have
\[a + \sqrt{2}a + b > a + b + \frac{a}{\sqrt{2}} \text{ so } \frac{1}{a + \sqrt{2}a + b } < \frac{1}{a + b + \frac{a}{\sqrt{2}}}\]
 and thus $AE$ dominates $HF$.

One can show, using similar arguments and direct computation, that these four saddle connections are the only ones whose image has the smallest slope. 

This analysis results in the 4 subdivisions of $\Omega_1$ shown below in Figure \ref{cusp1}.  Each subsection is labeled in a way that for each point (a,b) the saddle connection with smallest slope and short horizontal component in $M_{a,b}\mathcal{L}$ comes from the saddle connection with the corresponding label. 

A direct computation also shows that for $\Omega_2$ there is no further subdivison, and the image of the short vertical saddle connection on $\mathcal{L}^{R}$ has the smallest slope in $M_{a,b}\mathcal{L}^{R}$ for every $(a,b)\in\Omega_2$.

\begin{figure}[h!]
\centering
\begin{minipage}{.3\textwidth}
  \centering
  \labellist
\small\hair 1pt
 \pinlabel {$\Omega_{AH}$} [ ] at 180 680
 \pinlabel {$\Omega_{BD}$} [ ] at 183 590
 \pinlabel {$\Omega_{HF}$} [ ] at 202 515
 \pinlabel {$\Omega_{AE}$} [ ] at 187 477
\endlabellist
\centering
\includegraphics[width=.65\linewidth]{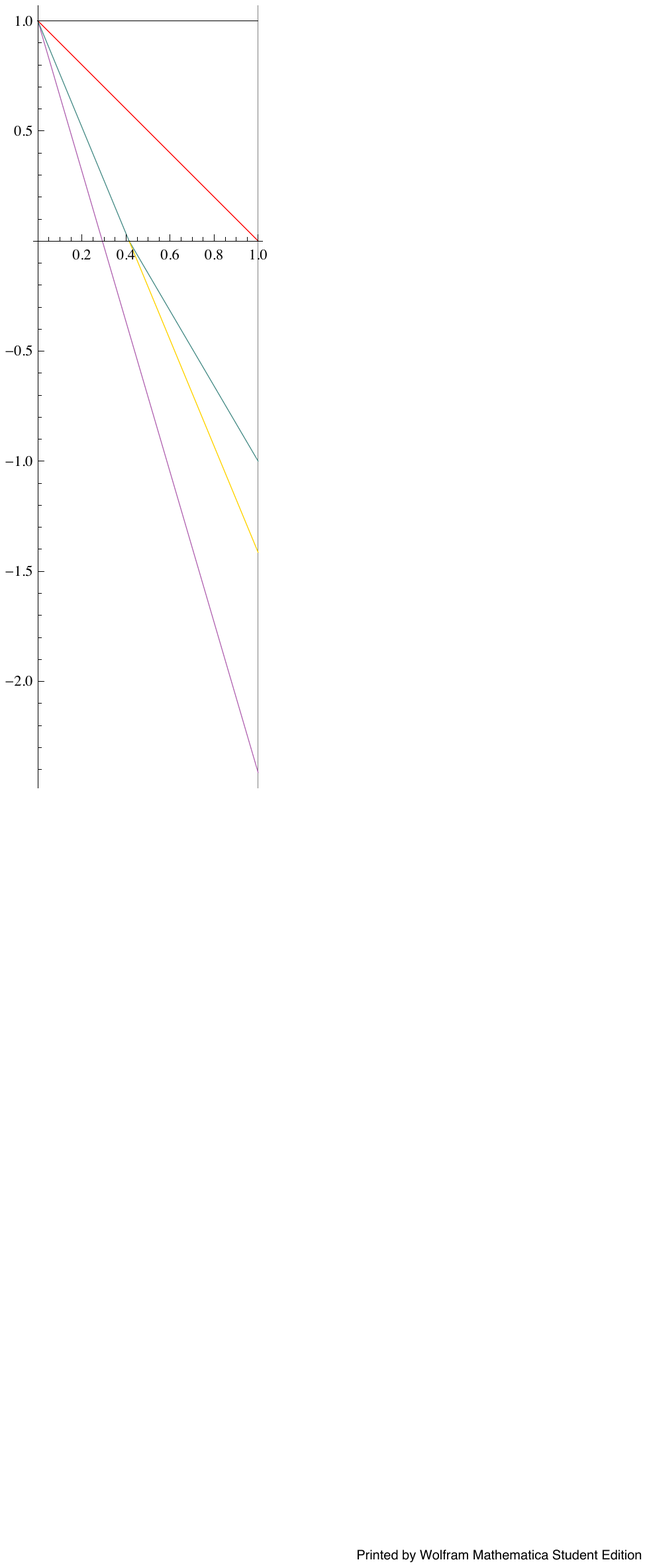}
\caption{$\Omega_1$}
\label{cusp1}
\end{minipage}%
\begin{minipage}{.3\textwidth}
\centering
 \labellist
\small\hair 1pt
 \pinlabel {$\Omega_{2}$} [ ] at 230 260
\endlabellist
\centering
\includegraphics[width=.55\linewidth]{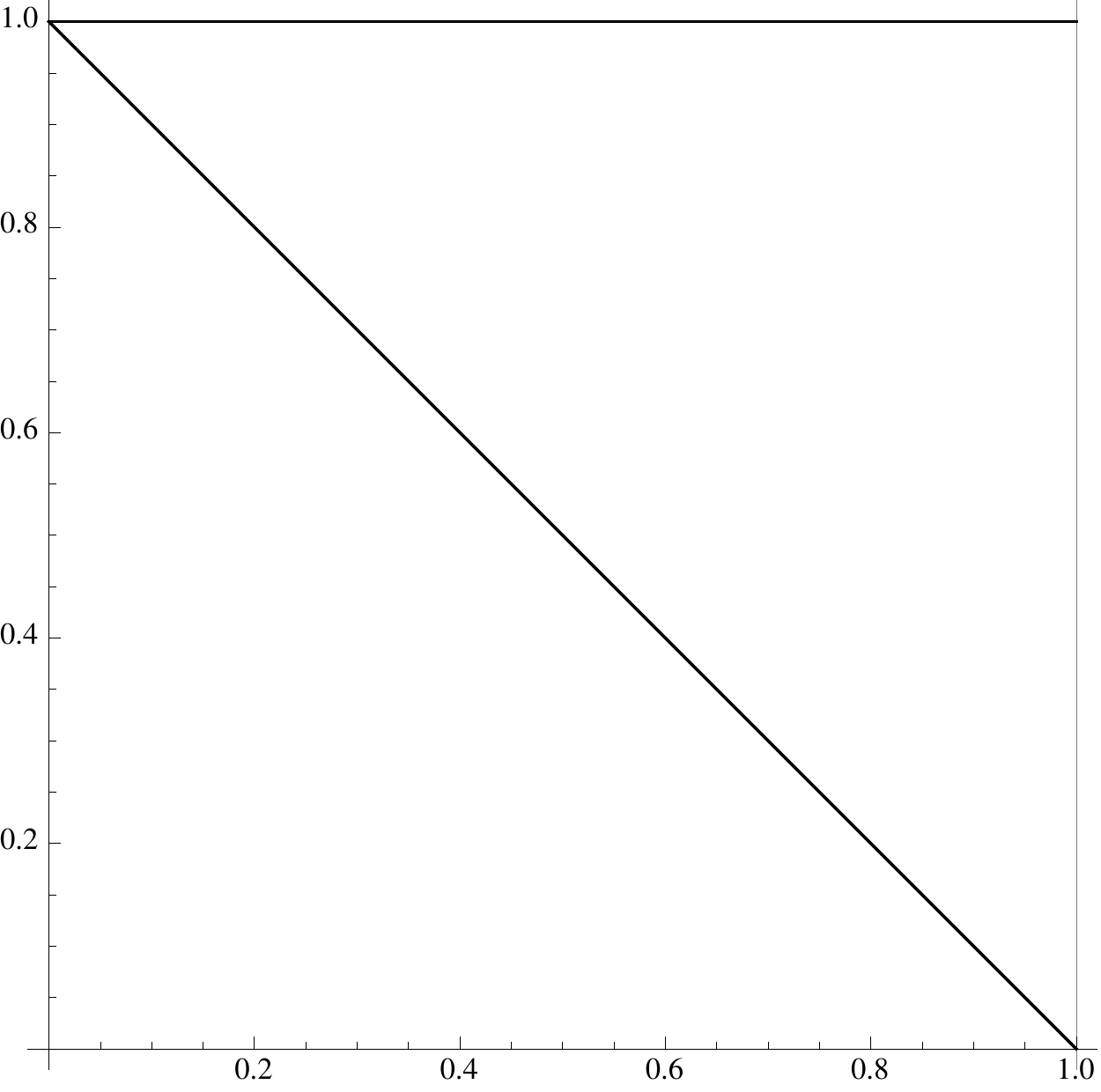}
\caption{$\Omega_2$}
\label{cusp2}
\end{minipage}
\end{figure}

Therefore, the return time function in these coordinates is given by:
\[
R(a,b)=\left\{\begin{array}{ll} 
\frac{1}{ab} & \text{for $(a,b)\in\Omega_{AH}$}\\
\frac{1}{a(a+b)} & \text{for $(a,b)\in\Omega_{BD}$}\\
\frac{\sqrt{2}}{a(a(1+\sqrt{2})+b\sqrt{2})} & \text{for $(a,b)\in\Omega_{HF}$}\\
\frac{1}{a(a(1+\sqrt{2})+b)} & \text{for $(a,b)\in\Omega_{AE}$}\\
\frac{1}{ab} & \text{for $(a,b)\in\Omega_{2}$}\\
\end{array}\right.
\] 

The strategy for finding the return map is as follows: Given a point $(a,b)\in\Omega_1$, the image is $(a',b')$, where either $(a',b')\in\Omega_1$ is such that  $h_{R(a,b)}M_{a,b}\mathcal{L}=M_{a',b'}\mathcal{L}$ or $(a',b')\in\Omega_{2}$ such that $h_{R(a,b)}M_{a,b}\mathcal{L}=M_{a',b'}\mathcal{L}^{R}$. Similarly, for any point $(a,b)\in\Omega_2$, the image is $(a',b')$ where either $(a',b')\in\Omega_1$ is such that  $h_{R(a,b)}M_{a,b}\mathcal{L}^{R}=M_{a',b'}\mathcal{L}$ or $(a',b')\in\Omega_{2}$ such that $h_{R(a,b)}M_{a,b}\mathcal{L}^{R}=M_{a',b'}\mathcal{L}^{R}$, in both cases $h_{R(a,b)}$ is the horocyle flow at the time $s=R(a,b)$. 

Note that \[
h_sM_{a,b}=\left[\begin{array}{cc} a & b \\ -sa & -sb + a^{-1}\end{array}\right].\] 
In what follows we apply an element of $\Gamma=SL(\mathcal{L})$ to $\mathcal{L}$ without changing the equality so that we can write our point in the required form. 

\begin{enumerate}

\item[$\Omega_{AH}$:] In this subregion we have $R(a,b)=\frac{1}{ab}$, and hence
$h_{R(a,b)}M_{a,b} = \left[\begin{array}{cc} a & b \\ -\frac{1}{b} & 0\end{array}\right]$.\\ 

So we have, 
\begin{align*}
&\left[\begin{array}{cc} a & b \\ -\frac{1}{b} & 0\end{array}\right] \left[\begin{array}{cc} 0 & -1 \\ {1} & 0\end{array}\right]\left[\begin{array}{cc} 0 & 1 \\ {-1} & 0\end{array}\right]\mathcal{L}  =\\ &\left[\begin{array}{cc} b & -a  \\ 0 & \frac{1}{b}\end{array}\right]\mathcal{L}^{R}=\left[\begin{array}{cc} b & -a  \\ 0 & \frac{1}{b}\end{array}\right]\left[\begin{array}{cc} 1 & 1  \\ 0 & 1\end{array}\right]^{k_{AH}}\mathcal{L}^{R}=\left[\begin{array}{cc} b & -a+bk_{AH}  \\ 0 & \frac{1}{b}\end{array}\right]\mathcal{L}^{R}
\end{align*} where $k_{AH}=\left\lfloor\frac{1+a}{b}\right\rfloor$, and $\left[\begin{array}{cc} 1 & 1  \\ 0 & 1\end{array}\right]\in\Gamma^{R}.$

\item [$\Omega_{BD}$:] In this subregion we have $R(a,b)=\frac{1}{a(a+b)}$, therefore $h_{R(a,b)}M_{a,b}=\left[\begin{array}{cc} a & b \\ -\frac{1}{a+b} & \frac{1}{a+b}\end{array}\right]$. 

So we have,
\begin{align*}
&\left[\begin{array}{cc} a & b \\ -\frac{1}{a+b} & \frac{1}{a+b}\end{array}\right]\mathcal{L}=\left[\begin{array}{cc} a & b \\ -\frac{1}{a+b} & \frac{1}{a+b}\end{array}\right]\left[\begin{array}{cc} 1 & 0 \\ 1 & 1\end{array}\right]\mathcal{L} =\\
&\left[\begin{array}{cc} a+b & b \\ 0 & \frac{1}{a+b}\end{array}\right]\left[\begin{array}{cc} 1 & 2+\sqrt{2} \\ 0 & 1\end{array}\right]^{k_{BD}}\mathcal{L}=\left[\begin{array}{cc} a + b & b + (2+\sqrt{2})(a+b)k_{BD} \\ 0 & \frac{1}{a+b}\end{array}\right]\mathcal{L}
\end{align*}
where 
$k_{BD} = \left\lfloor\frac{1-b}{(2+\sqrt{2})(a+b)}\right\rfloor$, and $\left[\begin{array}{cc} 1 & 0 \\ 1 & 1\end{array}\right],\left[\begin{array}{cc} 1 & 2+\sqrt{2} \\ 0 & 1\end{array}\right]\in\Gamma$. 

\item[$\Omega_{HF}$:] In this subregion $R(a,b)=\frac{\sqrt{2}}{a(a(1+\sqrt{2})+b\sqrt{2})}$, and hence $h_{R(a,b)}M_{a,b}=\left[\begin{array}{cc} a & b \\ -\frac{\sqrt{2}}{a(1+\sqrt{2})+b\sqrt{2}} & \frac{1+\sqrt{2}}{a(1+\sqrt{2})+b\sqrt{2}}\end{array}\right]$

\begin{eqnarray*}&&\left[\begin{array}{cc} a & b \\ -\frac{\sqrt{2}}{a(1+\sqrt{2}) + b\sqrt{2}} & \frac{1+\sqrt{2}}{a(1+\sqrt{2}) + b\sqrt{2}}\end{array}\right] \left[\begin{array}{cc} 1 + \sqrt{2} & \sqrt{2}+2 \\ \sqrt{2} & 1+\sqrt{2}\end{array}\right]\left[\begin{array}{cc} 1& \sqrt{2}+2 \\ 0 & 1\end{array}\right]^{k_{HF}}\mathcal{L}=
\\&& \left[\begin{array}{c c} (1+\sqrt{2})a + \sqrt{2}b & (2+\sqrt{2})a + (1+\sqrt{2})b + ((1+\sqrt{2})a + \sqrt{2}b)(2+\sqrt{2})k_{HF}\\0 & \frac{1}{(1+\sqrt{2})a + \sqrt{2}b}\end{array}\right]\mathcal{L}
\end{eqnarray*}
where $k_{HF} = \left\lfloor \frac{1-((2+\sqrt{2})a + (1+\sqrt{2})b)}{(2+\sqrt{2})((1+\sqrt{2})a + \sqrt{2}b)}\right\rfloor$, and $\left[\begin{array}{cc} 1 + \sqrt{2} & \sqrt{2}+2 \\ \sqrt{2} & 1+\sqrt{2}\end{array}\right],\left[\begin{array}{cc} 1& \sqrt{2}+2 \\ 0 & 1\end{array}\right]\in\Gamma$. 

\item[$\Omega_{AE}$:] In this subregion $R(a,b)=\frac{1}{a(a(1+\sqrt{2})+b)}$, and hence $h_{R(a,b)}M_{a,b}=\left[\begin{array}{cc} a & b \\ -\frac{1}{(1+\sqrt{2})a + b} & \frac{1+\sqrt{2}}{(1+\sqrt{2})a + b}\end{array}\right]$. 

So we have,
 
\begin{eqnarray*}
&&\left[\begin{array}{cc} a & b \\ -\frac{1}{(1+\sqrt{2})a + b} & \frac{1+\sqrt{2}}{(1+\sqrt{2})a + b}\end{array}\right] \left[\begin{array}{cc} 1+\sqrt{2} & 2(1+\sqrt{2}) \\ 1 & 1+\sqrt{2}\end{array}\right]\left[\begin{array}{cc} 1& \sqrt{2}+2 \\ 0 & 1\end{array}\right]^{k_{AE}}\mathcal{L} =\\ && \left[\begin{array}{cc}(1+\sqrt{2})a+b & 2(1+\sqrt{2})a+(1+\sqrt{2})b +((1+\sqrt{2})a+b)(2+\sqrt{2})k_{AE} \\ 0 & \frac{1}{(1+\sqrt{2})a + b}\end{array}\right]\mathcal{L}
\end{eqnarray*}
where $k_{AE} = \left\lfloor \frac{1-(2(1+\sqrt{2})a+(1+\sqrt{2})b)}{(2+\sqrt{2})((1+\sqrt{2})a+b)}\right\rfloor.$

\bigskip

\item[$\Omega_2$:] For this region $R(a,b)=\frac{1}{ab}$ and hence $h_{R(a,b)}M_{a,b} = \left[\begin{array}{cc} a & b \\ -\frac{1}{b} & 0\end{array}\right]$. 

So we have,

\begin{align*}&
\left[\begin{array}{cc} a & b \\ -\frac{1}{b} & 0\end{array}\right]L^{R}=\left[\begin{array}{cc} a & b \\ -\frac{1}{b} & 0\end{array}\right] \left[\begin{array}{cc} 0 & 1 \\ -1 & 0\end{array}\right]\mathcal{L}=\\
&\left[\begin{array}{cc} -b & a \\ 0 & -\frac{1}{b}\end{array}\right]\left[\begin{array}{cc} -1 & 0 \\ 0 & -1\end{array}\right]\left[\begin{array}{cc} 1& \sqrt{2}+2 \\ 0 & 1\end{array}\right]^{k_{2}}\mathcal{L}=\\
&\left[\begin{array}{cc}b & -a + (2 + \sqrt{2})bk_{2}\\ 0 & -\frac{1}{b}\end{array}\right]\mathcal{L}, 
\end{align*}
where $k_{2} = \left\lfloor \frac{1+a}{(2+\sqrt{2})b}\right\rfloor$ and $\left[\begin{array}{cc} -1 & 0 \\ 0 & -1\end{array}\right], \left[\begin{array}{cc} 1& \sqrt{2}+2 \\ 0 & 1\end{array}\right]\in\Gamma$. 

\end{enumerate}

Thus the return map in these coordinates can be defined on each subregion as follows:
\begin{align*}
&T_{AH}(a,b)=
(b,-a+bk_{AH}) \in \Omega_2,
\text{where }k_{AH}=\left\lfloor\frac{1+a}{b}\right\rfloor\\
&T_{BD}(a,b)=
(a+b,b+(2+\sqrt{2})(a+b)k_{BD}) \in \Omega_1,
\text{where }k_{BD} = \left\lfloor\frac{1-b}{(2+\sqrt{2})(a+b)}\right\rfloor\\
&T_{HF}(a,b)=
((1+\sqrt{2})a + \sqrt{2}b,(2+\sqrt{2})a + (1+\sqrt{2})b + ((1+\sqrt{2})a + \sqrt{2}b)(2+\sqrt{2})k_{HF}) \in \Omega_1,\\
&\hspace{60pt}\text{where }k_{HF} = \left\lfloor \frac{1-((2+\sqrt{2})a + (1+\sqrt{2})b)}{(2+\sqrt{2})((1+\sqrt{2})a + \sqrt{2}b)}\right\rfloor\\
&T_{AE}(a,b)=
((1+\sqrt{2})a+b,2(1+\sqrt{2})a+(1+\sqrt{2})b +((1+\sqrt{2})a+b)(2+\sqrt{2})k_{AE}) \in \Omega_1,\\
&\hspace{60pt}\text{where }k_{AE} = \left\lfloor \frac{1-(2(1+\sqrt{2})a+(1+\sqrt{2})b)}{(2+\sqrt{2})((1+\sqrt{2})a+b)}\right\rfloor\\
&T_{\Omega_2}(a,b)=
(b, -a+(2+\sqrt{2})bk_2) \in \Omega_1,
\text{where }k_2 =\left\lfloor \frac{1+a}{(2+\sqrt{2})b}\right\rfloor.
\end{align*}

\end{proof}

A picture of the return map can be found in Figures \ref{src}, \ref{trgt}. Figure \ref{src} depicts the various subregions corresponding to different values of the constants $k_{AH}$, $k_{BD}$, $k_{HF}$, $k_{AE}$, and $k_2$. Based on the return map function these regions map to the region of the same color in Figure \ref{trgt}. As you can see, the blue regions from $\Omega_2$ all map into $\Omega_1$, the red regions from $\Omega_{AH}$ map into $\Omega_2$, and the green, yellow, and purple regions all remain in $\Omega_1$.

\subsection{Slope gaps in the vertical strip}\label{verticalstrip}
Let $g\mathcal{L}$ be an element in the $SL(2,\RR)$ orbit of $\mathcal{L}$. Let 
\[
\overline{\mathbb{S}}_{g\mathcal{L}}=\{0<s_0<s_1<\dotsc<s_{N+1}<\dotsc\}
\]
be the set of positive slopes of saddle connections on $g\mathcal{L}$ in the vertical strip $V_1$ where
\[
V_1=\{(0,1]\times[0,\infty)\}\subset \RR^2.
\]

Since $s_0$ is the smallest positive slope in the vertical strip, the first time $h_{s}g\mathcal{L}$ hits the section is when $s=s_0$. Let $(a,b)\in\Omega$ such that $h_{s_{0}}g\mathcal{L}=M_{a,b}\mathcal{L}$ or $h_{s_{0}}g\mathcal{L}=M_{a,b}\mathcal{L}^{R}$. Note that the return time for the $T$ orbit of $M_{a,b}\mathcal{L}$ is given by $R(T^{i}(a,b))=s_{i+1}-s_{i}$ for $i=0,\dotsc,N,\dotsc$.  

Therefore the corresponding set of first $N$ gaps in the vertical strip for $g\mathcal{L}$ can be written as 
\[
\overline{\mathbb{G}}_{g\mathcal{L}}^{N}=\{s_{i+1}-s_i\mid i=0,\dotsc,N-1\}=\{R(T^{i}(a,b))\mid i=0,1,\dotsc,N-1\}.
\]

Thus, the question of understanding the gaps in the vertical strip has been translated into understanding the return times to the Poincar\'e section and we can express the proportion of gaps that are bounded below by some positive $t>0$ as a Birkhoff sum of the indicator function $\chi_{R^{-1}([t,\infty))}$: 
\[
\frac{1}{N}\left|\overline{\mathbb{G}}_{g\mathcal{L}}^{N}\cap[t,\infty)\right|=\frac{1}{N}\sum_{i=0}^{N-1}\chi_{R^{-1}([t,\infty))}(T^i(a,b)).
\]

Following the work of Athreya, Chaika, and Leli\`evre in \cite{ACL}, we can determine the ergodic invariant measures for the return map $T$ by using its interpretation as the first return map for horocycle flow on the space $M = SL(2,\RR)/\Gamma$. Results due to Sarnak \cite{S81} and Dani-Smillie \cite{DS84} on the equidistribution of long periodic orbits of $h_s$ allow us to state similar equidistribution results for $T$.
The following two theorems are adaptations of \cite[Theorem 4.1, Theorem 4.2 ]{ACL}. 

\begin{thm}\label{4.1}\cite{ACL} Other than measures supported on periodic orbits, the Lebesgue probability measure $m$ given by $dm = \frac{2}{3+\sqrt{2}}\, dadb$ is the unique ergodic invariant probability measure for $T$. For every $(a,b)$ which is not $T$-periodic and any continuous, compactly supported function $f \in C_c(\Omega)$, we have that
\[\lim_{N\rightarrow\infty}\frac{1}{N}\sum^{N-1}_{i=0}f(T^i(a,b))=\int_\Omega f\, dm\]
Moreover, if $\{(a_r,b_r)\}^\infty_{r=1}$ is a sequence of periodic points with periods $N(r) \rightarrow \infty$ as $r \rightarrow \infty$, we have, for any bounded function $f$ on $\Omega$,
\[\lim_{r\rightarrow\infty}\frac{1}{N(r)}\sum^{N(r)-1}_{i=0}f(T^i(a_r,b_r))=\int_\Omega f \, dm.\]
\end{thm}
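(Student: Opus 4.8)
The plan is to transfer every assertion about the return map $T$ into an assertion about the horocycle flow $h_s$ on $M=SL(2,\RR)/\Gamma$, for which, by Theorem \ref{Lgaps}, $T$ is precisely the first return map to the Poincar\'e section $\Omega=\Omega_1\cup\Omega_2$. Because $\Gamma$ is a lattice in $SL(2,\RR)$, the entire classification and equidistribution theory for horocycle flows on finite-volume quotients applies, and the argument follows the template of \cite{ACL}; the one new feature here is that $\Omega$ has two components, mirroring the two cusps of $\Gamma$.

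First I would set up the cross-section dictionary. Writing $\tau=R$ for the return time and $\pi\colon\Omega\to M$ for the inclusion of the section, the flow $(h_s,M)$ is the suspension of $(T,\Omega)$ under the roof $\tau$, so for $g\in C_c(M)$ one has
\[
\int_M g\,d\mu \;=\; \frac{1}{\bar\tau}\int_\Omega\left(\int_0^{\tau(y)} g\big(h_s\,\pi(y)\big)\,ds\right)d\nu(y),
\]
where $\nu$ is the $T$-invariant measure matched to the $h_s$-invariant measure $\mu$ and $\bar\tau=\int_\Omega\tau\,d\nu$. This correspondence is a bijection between ergodic $h_s$-invariant probability measures and ergodic $T$-invariant probability measures. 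By the classification of ergodic invariant measures for the horocycle flow on a finite-volume quotient \cite{DS84}, the former are exactly the Haar probability measure together with the measures carried by the periodic orbits of $h_s$; under the dictionary the periodic horocycle orbits correspond exactly to the periodic orbits of $T$. Hence the only ergodic $T$-invariant probability measure not supported on a periodic orbit is the image of Haar measure, which is absolutely continuous and, since $T$ is piecewise area-preserving by Theorem \ref{Lgaps}, has constant density. Computing $|\Omega_1|=(2+\sqrt2)/2$ and $|\Omega_2|=1/2$ gives the normalizing constant $2/(3+\sqrt2)$, identifying this measure with $m$.

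Next I would establish the pointwise equidistribution. For a non-$T$-periodic point $(a,b)$, its $T$-orbit records the successive returns of the horocycle orbit through $x=\pi(a,b)$, and this orbit is neither periodic nor eventually trapped, so by Dani--Smillie \cite{DS84} it equidistributes for Haar measure: $\tfrac1L\int_0^{L}g(h_sx)\,ds\to\int_M g\,d\mu$ for all $g\in C_c(M)$. Taking $L=S_N:=\sum_{i=0}^{N-1}\tau(T^i(a,b))$ and using the suspension splitting $\int_0^{S_N}g(h_sx)\,ds=\sum_{i=0}^{N-1}\Phi(T^i(a,b))$ with $\Phi(z)=\int_0^{\tau(z)}g(h_s\pi(z))\,ds$, I would deduce $\tfrac1{S_N}\sum_{i=0}^{N-1}\Phi(T^i(a,b))\to\tfrac1{\bar\tau}\int_\Omega\Phi\,dm$. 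Given $f\in C_c(\Omega)$, choosing $g\in C_c(M)$ supported in a thin flow-collar of $\pi(\Omega)$ (possible since $\tau\ge1$) so that $\Phi\approx f$, and combining with the renewal limit $S_N/N\to\bar\tau$, yields $\tfrac1N\sum_{i=0}^{N-1}f(T^i(a,b))\to\int_\Omega f\,dm$.

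For the final statement I would argue identically with closed orbits: a $T$-periodic point of period $N(r)$ lies on a closed horocycle of length $S_{N(r)}\to\infty$, and Sarnak's theorem \cite{S81} that long closed horocycles equidistribute for Haar measure supplies the corresponding time-average convergence; feeding it through the same collar comparison (now valid for any bounded $f$, since each orbit is finite and no tail estimate is needed) gives the claim. I expect the main obstacle to be exactly the non-periodic case: because $\tau$ is unbounded --- it is bounded below by $1$ but blows up along the cuspidal boundary pieces of $\Omega_1$ and $\Omega_2$ --- one must control the horocycle excursions deep into the two cusps in order to justify the renewal limit $S_N/N\to\bar\tau$ and to ensure that approximating $f$ by the flow-integral of a compactly supported $g$ loses no mass at infinity. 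Establishing this uniform cusp control, together with the integrability of $\tau$ against $m$ (which holds since $M$ has finite volume), is the technical heart of the adaptation.
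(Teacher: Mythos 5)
Your proposal takes essentially the same route as the paper: the paper gives no independent proof of this theorem, stating it as an adaptation of \cite[Theorems 4.1, 4.2]{ACL} whose argument is exactly what you describe --- the suspension dictionary identifying $T$-invariant measures with $h_s$-invariant measures on $SL(2,\RR)/\Gamma$, Dani--Smillie \cite{DS84} for non-periodic orbits, and Sarnak \cite{S81} for long closed horocycles. Your normalization $|\Omega_1|+|\Omega_2|=(2+\sqrt{2})/2+1/2=(3+\sqrt{2})/2$ correctly recovers the density $\frac{2}{3+\sqrt{2}}$, and the technical points you flag (unbounded return time near the cuspidal boundary, justifying $S_N/N\to\bar\tau$) are precisely the issues handled in \cite{ACL}.
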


Define $a_r:=\left[\begin{array}{cc}e^{r/2} & 0 \\ 0 & e^{-r/2}\end{array}\right]$ whose action on the moduli space is known as the \emph{geodesic flow}. Together with the observation in Section \ref{verticalstrip}, Theorem \ref{4.1} implies the following more general statement:

\begin{thm}\label{compthm}\cite{ACL}
Consider $\gamma = g\Gamma \in M=SL(2,\RR)/\Gamma$, which is not $h_s$-periodic. Then for $t \geq 0$,
\[\lim_{N\rightarrow\infty}\frac{1}{N}\left|\overline{\mathbb{G}}_\gamma^N \cap [t,\infty)\right| = m(\{(a,b) \in \Omega : R(a,b) \geq t\}).\]
If $\gamma$ is $h_s$-periodic, then we define $\gamma_r=a_{-r}g\Gamma = a_{-r}\gamma$. Then there is an $M(r)$ such that for $N \geq M(r)$, we have $\overline{\mathbb{G}}_{\gamma_r}^N = \overline{\mathbb{G}}_{\gamma_r}^{M(r)}$ and
\[\lim_{r\rightarrow\infty}\frac{1}{M(r)}\left|\overline{\mathbb{G}}_{\gamma_r}^{M(r)}\cap [t,\infty)\right| = m(\{(a,b)\in \Omega : R(a,b) \geq t\}).\]
\end{thm}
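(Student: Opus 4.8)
The plan is to recognize this statement as a direct consequence of the Birkhoff-sum identity established in Section~\ref{verticalstrip} together with the equidistribution statement of Theorem~\ref{4.1}, the only genuine work being the passage from the continuous, compactly supported test functions to which Theorem~\ref{4.1} applies to the discontinuous indicator $\chi_{R^{-1}([t,\infty))}$ that encodes the gap count. The correspondence between the first-return map $T$ and the flow $h_s$ also tells us that $\gamma$ is $h_s$-periodic if and only if the associated section point $(a,b)$ is $T$-periodic, which lets us select the appropriate clause of Theorem~\ref{4.1} in each case.

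For the non-periodic case I would start from the identity
\[
\frac{1}{N}\left|\overline{\mathbb{G}}_{g\mathcal{L}}^N \cap [t,\infty)\right| = \frac{1}{N}\sum_{i=0}^{N-1}\chi_{R^{-1}([t,\infty))}(T^i(a,b)),
\]
where $(a,b)\in\Omega$ is the point with $h_{s_0}g\mathcal{L}=M_{a,b}\mathcal{L}$ or $h_{s_0}g\mathcal{L}=M_{a,b}\mathcal{L}^R$. Since $\gamma$ is not $h_s$-periodic, $(a,b)$ is not $T$-periodic, so it suffices to show that the Birkhoff average of $\chi_{R^{-1}([t,\infty))}$ equals $m(\{(a,b)\in\Omega: R(a,b)\geq t\})=\int_\Omega \chi_{R^{-1}([t,\infty))}\,dm$. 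The obstruction is that this indicator is neither continuous nor, a priori, compactly supported inside the open region $\Omega$. I would resolve this with a sandwiching argument. First, because $m$ is Lebesgue and $\Omega$ is bounded, the mass of $\{R\geq t\}$ lying within distance $\delta$ of the boundary segment $\{a=0\}$, where $R$ blows up, tends to $0$ as $\delta\to0$, so up to measure $\epsilon$ the set may be truncated to one with compact closure inside $\Omega$. Second, I would pick continuous compactly supported $f_\epsilon^-\le\chi_{R^{-1}([t,\infty))}\le f_\epsilon^+$ with $\int(f_\epsilon^+-f_\epsilon^-)\,dm<\epsilon$; this is possible precisely because the topological boundary of $\{R\geq t\}$ is $m$-null. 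The latter fact follows from Theorem~\ref{Lgaps}: $R$ is piecewise rational with pieces cut out by finitely many linear equations, so on each piece the level set $\{R=t\}$ is a proper real-analytic curve, hence Lebesgue-null, and the finitely many interfaces between pieces are null as well. Applying Theorem~\ref{4.1} to $f_\epsilon^\pm$ and letting $\epsilon\to0$ yields the first displayed limit.

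For the periodic case, the point is that a periodic $h_s$-orbit produces only finitely many distinct return gaps, so the gap set stabilizes: if the forward $T$-orbit of the corresponding section point has period $M(r)$, then $\overline{\mathbb{G}}_{\gamma_r}^N=\overline{\mathbb{G}}_{\gamma_r}^{M(r)}$ for every $N\ge M(r)$. Introducing $\gamma_r=a_{-r}\gamma$ exploits the conjugacy $a_{-r}h_s a_r = h_{se^r}$, so that $h_\ell\gamma=\gamma$ forces $h_{\ell e^r}\gamma_r=\gamma_r$; thus $\gamma_r$ lies on a closed horocycle whose length, and hence whose number of section crossings $M(r)$, grows to infinity with $r$. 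These are exactly the long periodic orbits to which the second assertion of Theorem~\ref{4.1}, built on the equidistribution of long closed horocycles of Sarnak~\cite{S81} and Dani--Smillie~\cite{DS84}, applies. Running the same approximation of $\chi_{R^{-1}([t,\infty))}$ by $f_\epsilon^\pm$ and invoking the periodic equidistribution statement (with $N(r)=M(r)\to\infty$) gives the second displayed limit.

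The main obstacle I anticipate is not the dynamical input, which is imported wholesale from Theorem~\ref{4.1}, but the two approximation subtleties for the discontinuous, non-compactly-supported indicator $\chi_{R^{-1}([t,\infty))}$: verifying that its boundary is $m$-null, which rests on the explicit piecewise-rational description of $R$ from Theorem~\ref{Lgaps}, and controlling its mass near the cusp-like boundary $\{a=0\}$ of $\Omega$ where $R\to\infty$. Both are quantitative but routine once the structure of $R$ is in hand, so the theorem follows essentially formally from the results already established.
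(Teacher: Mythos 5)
Your route is the paper's route: the paper in fact gives no independent proof of this theorem---it is stated, with the citation, as an adaptation of \cite[Theorem 4.2]{ACL}---and the intended argument is exactly the one you outline: the Birkhoff-sum identity of Section \ref{verticalstrip}, the equidistribution input of Theorem \ref{4.1} (whose dynamical content is Sarnak/Dani--Smillie), the equivalence ``$\gamma$ is $h_s$-periodic iff the associated section point is $T$-periodic,'' and the renormalization $a_{-r}h_sa_r=h_{se^r}$ in the periodic case. Note that the periodic case is even easier than you make it: the second clause of Theorem \ref{4.1} is stated for arbitrary \emph{bounded} functions, so the indicator $\chi_{R^{-1}([t,\infty))}$ needs no approximation there at all.

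There is, however, one step in your non-periodic argument that fails as written: the claimed two-sided sandwich $f_\epsilon^-\le\chi_{R^{-1}([t,\infty))}\le f_\epsilon^+$ with $f_\epsilon^\pm\in C_c(\Omega)$. No such \emph{upper} approximant exists for any $t$: on $\Omega_{AH}$ one has $R=\frac{1}{ab}$, so $\{R\ge t\}$ contains points of $\Omega$ arbitrarily close to the closure point $(0,1)\notin\Omega$ (and, for small $t$, to the open lower edges as well); any continuous function dominating the indicator is therefore $\ge 1$ arbitrarily close to the missing boundary, and cannot have compact support in $\Omega$. Your truncation remark does not cure this, because knowing that $m\bigl(\{R\ge t\}\cap\{a<\delta\}\bigr)$ is small controls the \emph{measure} of the cusp region, not the \emph{fraction of orbit time} spent there, which is what enters the Birkhoff sum. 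The missing idea is the standard no-escape-of-mass (Portmanteau) argument: each empirical measure $\mu_N=\frac{1}{N}\sum_{i=0}^{N-1}\delta_{T^i(a,b)}$ is a probability measure on $\Omega$, so the upper bound is obtained by applying the $C_c$ \emph{lower} approximation to the complement. Concretely, choose $g\in C_c(\Omega)$ with $g\le\chi_{\{a\ge\delta\}\cap\Omega}$ and $\int_\Omega g\,dm\ge m(\{a\ge\delta\}\cap\Omega)-\epsilon$; then
\[
\limsup_{N\to\infty}\mu_N(\{a<\delta\})=\limsup_{N\to\infty}\bigl(1-\mu_N(\{a\ge\delta\})\bigr)\le 1-\int_\Omega g\,dm\le m(\{a<\delta\})+\epsilon .
\]
Combined with your lower bound on the truncated set and your (correct) observation that $\partial\{R\ge t\}$ is $m$-null because $R$ is piecewise rational, this gives $\mu_N(\{R\ge t\})\to m(\{R\ge t\})$, which is the first display of the theorem. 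With that substitution---and a one-line verification that $M(r)\to\infty$, e.g.\ because under $a_{-r}$ ever more saddle connections of $g\mathcal{L}$ acquire horizontal component $\le 1$ and each contributes a crossing per period---your proof is complete and agrees with the argument the paper imports from \cite{ACL}.
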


\noindent In particular this implies that for $\mathcal{L}$ and $g\mathcal{L}$ where $g \in SL(2,\RR)$, the limiting gap distributions will be the same.


The next lemma establishes a relation between the renormalized slope gaps in $\mathcal{L}$ in the $\ell_{\infty}$ ball of radius $R$ around the origin and the slope gaps in the vertical strip for an appropriate element in the $SL(2,\RR)$ orbit of $\mathcal{L}$.  

\begin{lem}\label{balltostrip} Let $g_{R}\mathcal{L}=a_{-2\log{R}}\mathcal{L}=\left[\begin{array}{cc}R^{-1} & 0 \\ 0 & R\end{array}\right]\mathcal{L}$. Then,
\[
\frac{1}{N}\left|\mathbb{G}_\mathcal{L}^{R}\cap[t,\infty)\right|=\frac{1}{N}\left|\overline{\mathbb{G}}_{g_{R}\mathcal{L}}^{N(R)}\cap[t,\infty)\right|.
\]
\end{lem}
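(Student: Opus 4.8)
The plan is to move the renormalized ball gaps on $\mathcal{L}$ to the slope gaps in the vertical strip on $g_R\mathcal{L}$ by means of $g_R=a_{-2\log R}=\operatorname{diag}(R^{-1},R)$, and then to match the two finite families slope by slope. The first step is to record that $g_R$ sends a holonomy vector $v_\gamma$ to $(\re(v_\gamma)/R,\,R\,\im(v_\gamma))$ and hence multiplies every slope by exactly $R^{2}$; since $\Lambda_{sc}(g_R\mathcal{L})=g_R\Lambda_{sc}(\mathcal{L})$, the positive slopes of saddle connections on $g_R\mathcal{L}$ are precisely $R^{2}$ times those on $\mathcal{L}$. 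This is the very factor appearing in the definition of $\mathbb{G}_{\mathcal{L}}^{R}$, so the renormalization on the ball side is exactly absorbed by the geometric scaling on the strip side, and the two families of gaps can differ only through which saddle connections are being compared.

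Next I would determine which saddle connections enter each side. The vector $g_R v_\gamma$ lies in $V_1=(0,1]\times[0,\infty)$ precisely when $\re(v_\gamma)/R\in(0,1]$, that is, when $0<\re(v_\gamma)\le R$, the inequality on $\im$ being automatic in the first quadrant; thus $\overline{\mathbb{S}}_{g_R\mathcal{L}}$ lists $R^{2}\slope(v_\gamma)$ over all saddle connections of $\mathcal{L}$ with $0<\re(v_\gamma)\le R$, ordered by slope. The ball set $\mathbb{S}_{\mathcal{L}}^{R}$ uses the same vectors but subject to the additional constraint $0\le\im(v_\gamma)\le R$. The lemma therefore reduces to the assertion that, listed by increasing slope, the first $N(R)$ entries of $\overline{\mathbb{S}}_{g_R\mathcal{L}}$ are exactly $R^{2}s_0^{R},\dots,R^{2}s_{N(R)-1}^{R}$. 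Granting this identification, the multisets of consecutive differences agree, so for each threshold $t$ the two cardinalities $|\,\cdot\cap[t,\infty)|$ coincide; the single extra difference carried by $\overline{\mathbb{G}}^{N(R)}$ (which has $N(R)$ gaps against $N(R)-1$ for $\mathbb{G}_{\mathcal{L}}^{R}$), together with the omission of the slope $0$ term, are boundary effects that do not alter the count in the normalized form stated.

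I expect this slope-by-slope matching to be the only real difficulty, since slope order and vertical height are not monotonically related: a saddle connection with $0<\re(v_\gamma)\le R$ but $\im(v_\gamma)>R$ is absent from the ball yet present in the strip, and a priori its slope could interleave with the genuine ball slopes. To control this I would estimate, near the upper edge $\im(v_\gamma)=R$ of the ball, the saddle connections with $\re\le R<\im$ whose slope falls below the $N(R)$th strip slope, and pair them against the ball vectors of slope greater than $1$ using the $R_{\pi/2}$ symmetry that carries $\mathcal{L}$ to $\mathcal{L}^{R}$ and underlies the two-orbit splitting $\Omega^{M}=\Omega_h^{M}\cup\Omega_v^{M}$; this exchange of the horizontal and vertical directions is what should force the excluded and the extra slopes to cancel in pairs. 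Once the first $N(R)$ strip slopes are identified with the scaled ball slopes, the equality of gap counts above $t$ is immediate after dividing by $N$ on both sides. The payoff, used immediately afterward, is that the successive strip gaps are the return times of $h_s$ to the Poincar\'e section, so that Theorem \ref{compthm} applies directly to $g_R\mathcal{L}=a_{-2\log R}\mathcal{L}$.
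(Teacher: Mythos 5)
Your first two paragraphs are, up to wording, the paper's entire proof of this lemma: the paper observes that every saddle connection of $\mathcal{L}$ lying in the $\ell_\infty$-ball of radius $R$ has horizontal component at most $R$, hence is carried by $g_R$ into the vertical strip $V_1$ with slope (and therefore gap) multiplied by $R^2$, and it then concludes that the first $N(R)$ strip gaps of $g_R\mathcal{L}$ are \emph{precisely} the renormalized ball gaps. The interleaving problem you isolate in your third paragraph is exactly what that conclusion passes over, and the paper says nothing about it: $g_R$ also pulls into $V_1$ every saddle connection with $\re(v_\gamma)\le R<\im(v_\gamma)$, and such vectors have slopes greater than $1$ that mix with the ball slopes greater than $1$.

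However, your proposed repair is a hope rather than an argument, and it is the step on which the lemma stands or falls; in fact it cannot be completed. First, $R_{\pi/2}$ is not a symmetry of $\Lambda_{sc}(\mathcal{L})$: it takes $\mathcal{L}$ to the different surface $\mathcal{L}^{R}$ (the horizontal saddle-connection orbits of $\mathcal{L}$ have lengths $1$ and $1+\sqrt{2}$, the vertical ones $1$ and $\sqrt{2}$), so there is no coordinate-swapping involution of the holonomy set available to do the pairing. Second, pairing vectors cannot pair gaps: inserting the extra vectors subdivides existing gaps and deleting the excluded ones merges them, so gap counts are not controlled by any bijection between the two families. Quantitatively, quadratic growth of saddle connections shows that the first $N(R)$ strip slopes reach unscaled slope about $2$, so roughly $N(R)/4$ of them come from extra vectors (those with $\re\le R<\im$ and slope in $(1,2]$), displacing the roughly $N(R)/4$ ball slopes of slope greater than $2$; and the displaced ball slopes carry systematically larger gaps, since a gap between two consecutive ball slopes that both exceed $\sigma_0$ equals the product of the two slopes times a gap of the reciprocal slopes, hence has renormalized size on the order of $\sigma_0^{2}$. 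Consequently, for large $t$ a proportion on the order of $t^{-1/2}$ of the renormalized ball gaps lie in $[t,\infty)$, whereas Theorem \ref{compthm} gives proportion $m(\{R\ge t\})=O(t^{-1}\log t)$ for the strip gaps; so the two sides of the lemma differ by a definite amount that no cancellation scheme can remove (literal equality already fails at $t=0$, where the two sides count $N(R)-1$ and $N(R)$ gaps). The identification in your second paragraph is exact, up to $O(1)$ boundary terms, only when $\mathbb{S}^{R}_{\mathcal{L}}$ is cut out by a sector condition such as $\slope\le1$, $0<\re\le R$ instead of the $\ell_\infty$-ball; with that definition your first two paragraphs already constitute a complete proof. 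As written, then, your proof is incomplete at its crucial step---but the difficulty you flagged is genuine, and the paper's own one-paragraph proof silently asserts the very matching that you, rightly, declined to assert without justification.
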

\begin{proof} If we look at the set of saddle connections on $\mathcal{L}$ that lie in the $\ell_{\infty}$ ball of radius $R$ about the origin, each of them has horizontal component $\le R$. Hence, after applying the matrix $g_{R}=\left[\begin{array}{cc}R^{-1} & 0 \\ 0 & R\end{array}\right]$, each of these saddle connections will be mapped to a saddle connection on $g_{R}\mathcal{L}$ in the vertical strip.  The corresponding slopes, hence the gaps, will be scaled by $R^{2}$. Therefore, the gap set of first $N(R)$ saddle connections on $g_{R}\mathcal{L}$ in the vertical strip is precisely the set of renormalized gaps for $\mathcal{L}$ in the open ball of radius $R$. 
\end{proof}

\subsection{Proof of Thm \ref{octthm}}\label{11pf}

We recall the statement

\begin{namedtheorem}[Theorem \ref{octthm}]\label{Lgaps}There is a limiting probability distribution function $f:[0,\infty)\to[0,1]$ such that 
\[
\lim_{R\to\infty}\frac{|\mathbb{G}_\mathcal{L}^{R}\cap(a,b)|}{N(R)}=\int_{a}^{b}f(x)dx.
\]
The function $f$ is continuous, piecewise differentiable with seven points of non-differentiability. Each piece is expressed in terms of elementary functions. 
\end{namedtheorem}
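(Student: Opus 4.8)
The plan is to reduce the statement to the analysis of a single one-variable function, the limiting tail distribution, and then to differentiate it. First I would pass from gaps in the $\ell_\infty$ ball to gaps in the vertical strip. The orbit of $\mathcal{L}$ under $h_s$ is periodic, since the vertical direction on $\mathcal{L}$ is a cylinder direction, so the renormalizations $g_R\mathcal{L}=a_{-2\log R}\mathcal{L}$ are exactly the geodesic translates $\gamma_r$, $r=2\log R$, appearing in the periodic case of Theorem \ref{compthm}. Combining Lemma \ref{balltostrip} with Theorem \ref{compthm} (identifying the count $N(R)$ with the period $M(r)$) gives, for every $t\ge 0$,
\[
F(t):=\lim_{R\to\infty}\frac{1}{N(R)}\bigl|\mathbb{G}_{\mathcal{L}}^{R}\cap[t,\infty)\bigr|=m\bigl(\{(a,b)\in\Omega:R(a,b)\ge t\}\bigr),
\]
where $m$ is the ergodic invariant measure of Theorem \ref{4.1}. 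Since $|\mathbb{G}_{\mathcal{L}}^R\cap(a,b)|=|\mathbb{G}_{\mathcal{L}}^R\cap[a,\infty)|-|\mathbb{G}_{\mathcal{L}}^R\cap[b,\infty)|$ up to endpoint terms that vanish in the limit, setting $f=-F'$ wherever $F$ is differentiable yields the asserted identity $\lim_{R} N(R)^{-1}|\mathbb{G}_{\mathcal{L}}^R\cap(a,b)|=F(a)-F(b)=\int_a^b f(x)\,dx$. Thus the entire theorem reduces to computing $F$ and checking its regularity.

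Next I would compute $F$ explicitly. Because $dm=\frac{2}{3+\sqrt2}\,da\,db$ is a constant multiple of Lebesgue measure, $F(t)$ is $\frac{2}{3+\sqrt2}$ times the area of $\{(a,b)\in\Omega:R(a,b)\ge t\}$. The return time function $R$ and the decomposition $\Omega_1=\Omega_{AH}\cup\Omega_{BD}\cup\Omega_{HF}\cup\Omega_{AE}$ (together with $\Omega_2$) computed in Section \ref{poincare} show that on each piece $R$ has the form $c/(a\,\ell(a,b))$ for a linear form $\ell$, so each level set $\{R=t\}$ is a hyperbolic arc $a\,\ell(a,b)=c/t$. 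Computing $F$ then amounts to integrating $da\,db$ over the portion of each polygonal region lying on the side $R\ge t$ of this arc; since $R$ is uniformly bounded below by $1$, we have $F\equiv 1$ on $[0,1]$, and $F$ becomes nonconstant only for $t>1$. Each such integral evaluates to an elementary (rational-plus-logarithmic) expression, which guarantees that every branch of $F$, and hence of $f$, is elementary.

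The delicate part is organizing these area integrals as $t$ ranges over $(1,\infty)$. As $t$ grows, the arc $\{R=t\}$ sweeps across the fixed polygons $\Omega_{AH},\Omega_{BD},\Omega_{HF},\Omega_{AE},\Omega_2$, and the combinatorial type of its intersection with their boundaries changes at finitely many critical values of $t$ — namely when the arc passes through a vertex of a region or crosses from one bounding edge to another. These are precisely the candidate points of non-differentiability of $F$. I would locate them by direct computation piece by piece, assemble $F$ as a continuous piecewise-elementary function by matching the branches at each critical value, differentiate to obtain $f$, and verify that exactly seven breakpoints survive in the density, checking continuity at each (including at the left endpoint $t=1$ of the support) from the explicit formulas.

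I expect the main obstacle to be exactly this bookkeeping: tracking how the level curves of the five rational pieces interact with the boundaries of $\Omega_1$ and $\Omega_2$ over the whole range of $t$, matching branches to force continuity, and confirming the count of seven non-differentiable points. The analytic input — existence of the limit and the closed formula for $F$ — is already supplied by Theorems \ref{compthm} and \ref{4.1}; what remains is a careful but elementary planar integration.
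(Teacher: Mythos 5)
Your proposal follows essentially the same route as the paper's proof: reduce to slope gaps in the vertical strip via Lemma \ref{balltostrip}, invoke Theorem \ref{compthm} (using $h_s$-periodicity of $\mathcal{L}$, which the paper certifies by the explicit parabolic $h_{-\sqrt{2}-1}\in\Gamma$ and you by the equivalent cylinder-direction observation) to identify the limiting tail distribution with $m(\{(a,b)\in\Omega : R(a,b)\ge t\})$, and then compute this measure as the area cut out by the return-time hyperbolas in the subregions $\Omega_{AH},\Omega_{BD},\Omega_{HF},\Omega_{AE},\Omega_2$, tracking the critical values of $t$ where the level arcs enter, become tangent to, or exit each polygon, and differentiating the resulting cumulative distribution function to obtain the seven-piece density. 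Your additional remark making explicit the passage from the tail function $F$ to the interval statement via $f=-F'$ is a detail the paper leaves implicit, but the substance of the argument is identical.
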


\begin{figure}[h!]
\begin{minipage}{.49\textwidth}
\centering
\includegraphics[width=2.5in]{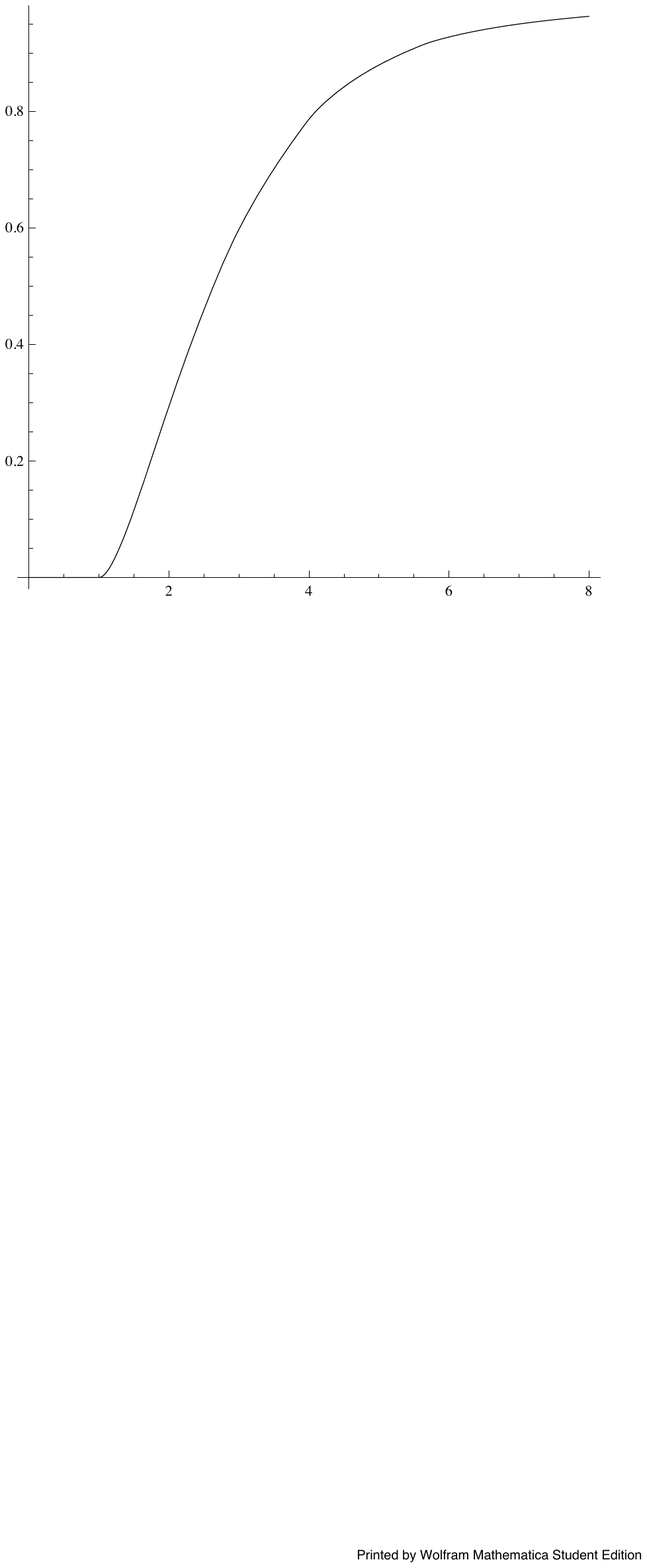}
\caption{Cumulative distribution function}
\label{CDF}
\end{minipage}
\begin{minipage}{.49\textwidth}
\centering
\includegraphics[width=2.5in]{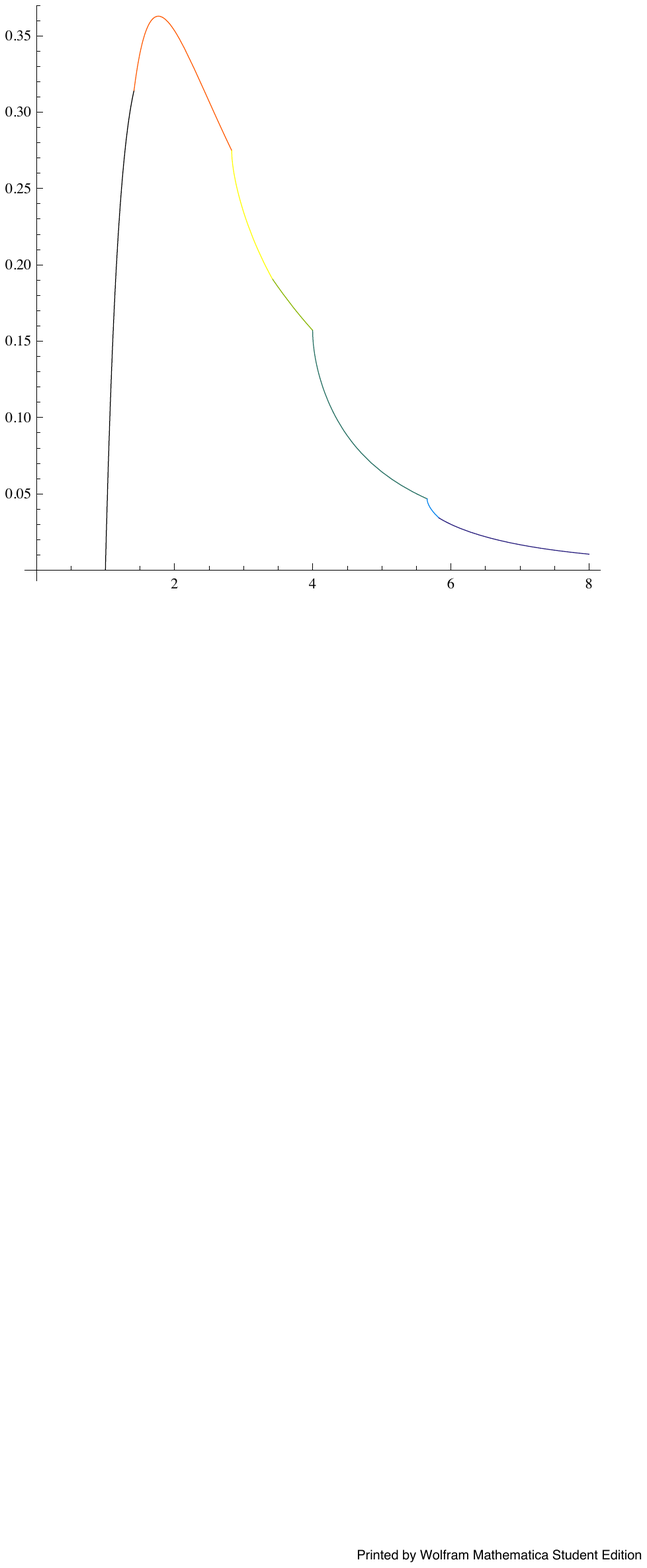}
\caption{Limiting gap distribution for $\mathcal{L}$}
\label{dist}
\end{minipage}
\end{figure}

\begin{proof} The translation surface associated to the normalized $L$ is $h_s$ periodic, since the matrix $h_{-\sqrt{2}-1}=\left[\begin{array}{cc}1 & 0 \\ \sqrt{2}+1 & 1\end{array}\right]$ is in the Veech group $\Gamma$. Now, combining Lemma \ref{balltostrip} and Theorem \ref{compthm}, we get
\[
\lim_{R\to\infty}\frac{1}{N}\left|\mathbb{G}_\mathcal{L}^{R}\cap[t,\infty)\right|=\lim_{R\to\infty}\frac{1}{N}\left|\overline{\mathbb{G}}_{g_{R}\mathcal{L}}^{N(R)}\cap[t,\infty)\right|=m(\{(a,b)\in \Omega : R(a,b) \geq t\}).
\]

To find the cumulative distribution function (cdf) we compute the area under the return time function. This construction of the cdf depends on the intersection of the return time hyperbolas with the subregions of $\Omega_1\cup\Omega_2$. On the $\Omega_{AH}$ region, the integral changes shape at two points, when $R(a,b) = 1$, corresponding to when the hyperbola enters the region, and when $R(a,b) = 4$, corresponding to when the hyperbola becomes tangent to the line $b=1-a$. On $\Omega_{BD}$, the hyperbola enters the region when $R(a,b) = 1$, becomes tangent to the lines $b = \frac{1-(1+\sqrt{2}a)}{\sqrt{2}}$ and $b = 1-(1+\sqrt{2}a)$ when $R(a,b) =4\sqrt{2}$ and when $R(a,b) =\frac{1}{(\sqrt{2}-1)^2}$ it intersects the point $(\sqrt{2}-1,0)$, thus the region of integration changes shape 3 times. For $\Omega_{HF}$, the hyperbola enters the region when $R(a,b) = \sqrt{2}$, becomes tangent to the line $b=1-(1+\sqrt{2}a)$ at $R(a,b) =2\sqrt{2}$ and exits the region when $R(a,b) =2+\sqrt{2}$. On the $\Omega_{AE}$ region, the hyperbola enters the region when $R(a,b) =1$ and becomes tangent to the line $b=1-(\sqrt{2}+1)a$ when $R(a,b) =4$. On $\Omega_2$, our function behaves the same as on $\Omega_{AH}$. Thus our cumulative distribution function, see Figure \ref{CDF}, is a piecewise function defined on 7 subintervals from $[1,\infty)$.

Differentiating the cumulative distribution function yields a graph with 7 points of non differentiability describing the distribution of the gaps between the slopes of saddle connections on the octagon, see Figure \ref{dist}.

\begin{align*}
f_1(t) &= -\frac{8\ln{\frac{1}{t}}}{(3+\sqrt{2})t^2}, \text{ for } t \in \left[1, \sqrt{2}\right)\\
f_2(t) &= -\frac{\ln{2}+10\ln{\frac{1}{t}}}{(3+\sqrt{2})t^2}, \text{ for } t \in \left[\sqrt{2},2\sqrt{2}\right)\\
f_3(t) &= -\frac{4\tanh^{-1}\left(\frac{\sqrt{t(-2\sqrt{2}+t)}}{t}\right) + \ln{2} + 10\ln{\frac{1}{t}}}{(3+\sqrt{2})t^2}, \text{ for } t \in \left[2\sqrt{2},\sqrt{2}+2\right)\\
f_4(t) & = -\frac{8\ln{\frac{1}{t}}}{(3+\sqrt{2})t^2}, \text{ for } t \in \left[\sqrt{2}+2, 4\right)\\
f_5(t) &= -\frac{12\tanh^{-1}\left(\sqrt{\frac{-4+t}{t}}\right)+8\ln{\frac{1}{t}}}{(3+\sqrt{2})t^2}, \text{ for } t \in \left[4, 4\sqrt{2}\right)\\
f_6(t) &= -\frac{4\left(3\tanh^{-1}\left(\sqrt{\frac{-4+t}{t}}\right)+2\left(\tanh^{-1}\left(\frac{\sqrt{t(-4\sqrt{2}+t)}}{t}\right)+\ln{\frac{1}{t}}\right)\right)}{(3+\sqrt{2})t^2}, \text{ for } t \in \left[4\sqrt{2},\frac{1}{(\sqrt{2}-1)^2}\right)\\
f_7(t) &= -\frac{24\tanh^{-1}\left(\sqrt{\frac{-4+t}{t}}\right) + \ln{64} + 16\ln{\frac{1}{t}} + 4\ln{\frac{t+\sqrt{t(-4\sqrt{2}+t)}}{2t}-4\ln\left(1-\frac{\sqrt{t(-4\sqrt{2}+t)}}{t}\right)}}{2(3+\sqrt{2})t^2},\\&\hspace{324pt} \text{ for } t \in \left[\frac{1}{(\sqrt{2}-1)^2},\infty\right)
\end{align*}
\end{proof}

\subsection{Volume Computations} \label{volume} Since $\Omega$ is a Poincar\'e section, the integral of the return time should yield the covolume of the Veech group. Recall the definition of the dilogarithm function
\[\Li_2(z) = \sum_{k=1}^\infty \frac{z^k}{k^2} =  \int_z^0 \frac{\ln(1-t)\ dt}{t}\]

Integral over AH region: 
\begin{displaymath}
\int_0^1\int_{1-x}^1 \frac{1}{xy} dy dx = \frac{\pi^2}{6}
\end{displaymath}

Integral over BD region:
\[
\int_{\sqrt{2}-1}^1 \int_{\frac{1-(1+\sqrt{2})x}{\sqrt{2}}}^{1-x} \frac{1}{x(x+y)} \ dydx = \Li_{2}(1)-\Li_2({\sqrt{2}-1})-\ln{(\sqrt{2})}\ln{(\sqrt{2}-1)}.
\]

\[
\int_{0}^{\sqrt{2}-1} \int_{(-1-\sqrt{2})x+1}^{1-x} \frac{1}{x(x+y)}\ dydx= \Li_{2}(2-\sqrt{2}).
\]
\\
Integral over HF region:
\[
\int_{\sqrt{2}-1}^1 \int_{1-(1+\sqrt{2})x}^{\frac{1-(1+\sqrt{2})x}{\sqrt{2}}} \frac{\sqrt{2}}{x((1+\sqrt{2})x+\sqrt{2}y)}\ dydx=\Li_2\left(\frac{1}{\sqrt{2}}\right)-\Li_2\left(1-\frac{1}{\sqrt{2}}\right)+\ln{(\sqrt{2})}\ln{(\sqrt{2}-1)}.
\]
\\
Integral over AE region:
\[
\int_{\sqrt{2}-1}^{1} \int_{-(2+\sqrt{2})x+1}^{1-(1+\sqrt{2})x} \frac{1}{x((1+\sqrt{2})x+y)}\ dydx=\frac{\pi^2}{6}.
\]
Integrating over $\Omega_2$ we get
\begin{displaymath}
\int_0^1\int_{1-x}^1 \frac{1}{xy} dy dx = \frac{\pi^2}{6}.
\end{displaymath}
Therefore sum of the integrals: \[
\int\int_{\Omega}R(x,y)\ dydx= \]
\[
\frac{4\pi^2}{6}-\Li_2(\sqrt{2}-1)-\ln{(\sqrt{2})}\ln{(\sqrt{2}-1)}+\Li_2(2-\sqrt{2})+\Li_2\left(\frac{1}{\sqrt{2}}\right)-\Li_2\left(1-\frac{1}{\sqrt{2}}\right)+\ln{(\sqrt{2})}\ln{(\sqrt{2}-1)}.
\]

By using the following dilogarithm identities, \cite{Bridgeman}

\begin{equation} \label{eq:dilog1}
\Li_2(z)=-\Li_2(1-z)-\ln{(1-z)}\ln{(z)}+\frac{\pi^2}{6}
\end{equation}

\begin{equation} \label{eq:dilog2}
\Li_2(z)+\Li_2(-z)=\frac{1}{2}\Li_2(z^2)
\end{equation}

\begin{equation} \label{eq:dilog3}
\Li_2(z)=-\Li_2\left(\frac{1}{z}\right)-\frac{1}{2}\ln^2{(-z)}-\frac{\pi^2}{6}\ \text{ for } z\notin(0,1)
\end{equation}

\begin{equation} \label{eq:dilog4}
\Li_2(z)=-\Li_2\left(\frac{z}{z-1}\right)-\frac{1}{2}\ln^2{(1-z)}\ \text{ for } z\notin(1,\infty)
\end{equation}

we can deduce that \[
\int\int_{\Omega}R(x,y)\ dydx=\frac{\pi^2}{4}-\frac{1}{4}\ln^2{(2-\sqrt{2})}-\frac{1}{2}\ln^2{(\sqrt{2}-1)}+\ln{\left(\frac{1}{\sqrt{2}}\right)}\ln{\left(1-\frac{1}{\sqrt{2}}\right)}+\frac{\pi i\ln{2}}{2}-\frac{1}{2}\ln^{2}{(-\sqrt{2})}
\]
which can be further reduced to
\[
\int\int_{\Omega}R(x,y)\ dydx=\frac{3\pi^2}{4}
\] 
by using logarithm identities, which is precisely the covolume of the Veech group $\Gamma$. Therefore $\Omega$ is indeed a full section for the horoccyle flow.


\section{Generalization to higher number of cusps}\label{poincaregeneral}

Let  $(X,\omega)$ be a Veech surface with $n < \infty$ cusps. In this section, we describe a parametrization of a Poincar\'e section for the horocycle flow on $SL(2,\RR)/SL(X,\omega)$ generalizing our strategy in Section \ref{poincare}. We adhere to the notation introduced in Section \ref{poincare}.

Let $\Gamma_1,\dotsc,\Gamma_n$ be maximal parabolic sugroups of $SL(X,\omega)$ representing the conjugacy classes of all maximal parabolic subgroups. For any (and hence all) $i=1,\dotsc,n$, the subgroup $\Gamma_i$ is isomorphic to either $\mathbb{Z}\oplus\mathbb{Z}/2\mathbb{Z}$ or $\mathbb{Z}$, depending on whether $-Id=\left[\begin{array}{cc}-1 & 0 \\ 0 & -1\end{array}\right]$ belongs to the Veech group $SL(X,\omega)$ or not.  Let $P_i\in\Gamma_i$ be a generator for the infinite cyclic factor. Note that, in the first case where $\Gamma_i\cong\mathbb{Z}\oplus\mathbb{Z}/2\mathbb{Z}$, the element $P_i$ can always be chosen to have eigenvalue $1$.   

We can decompose the set $\Lambda_{sc}(X,\omega)$  into a finite union of disjoint $SL(X,\omega)$ orbits of saddle connections. That is, there exist $v_1, \ldots, v_m \in \RR^2$ such that 

\[
\Lambda_{sc}(X,\omega)=\bigcup_{j=1}^{m} SL(X,\omega)v_j.
\]
In this decomposition, every $v_j$ is parallel to a direction determined by a parabolic subgroup $\Gamma_{i}$ , i.e. it is in the unique non-trivial eigenspace of the parabolic element $P_i$, see \cite{V89}. In each direction we can then find the shortest vector, $\pm w_i$. We denote the Veech group orbits of each of these vectors by $\Lambda_{sc}^{\pm w_i}(X,\omega) := SL(X,\omega)\cdot w_i\cup SL(X,\omega)\cdot -w_i$.

First assume that the Veech group $SL(X,\omega)$ contains the element $-Id$. 
Note that, in this case $\Lambda_{sc}^{\pm w_i}(X,\omega)=\Lambda^{w_i}_{sc}(X,\omega):=SL(X,\omega)\cdot w_i$. 
As we noted above, in this case the parabolic generator $P_i$ of the infinite cyclic factor of the maximal parabolic subgroup $\Gamma_i$ can be chosen to have eigenvalue 1. Thus for the parabolic element $P_i$ associated to  $w_i$ there exists $C_i\in SL(2,\RR)$ such that
\[S_i=C_iP_iC_i^{-1} = \left[\begin{array}{cc}1 & \alpha_i \\ 0 & 1\end{array}\right].\]
After replacing $P_i$ with its inverse if necessary, we can assume that  $\alpha_i>0$. 
Since $w_i$ is an eigenvector for the matrix $P_i$, the vector $C_i\cdot w_i$ is an eigenvector for \[\left[\begin{array}{cc}1 & \alpha_i \\ 0 & 1\end{array}\right],\] hence it can be written as \[C_i\cdot w_i=\left[\begin{array}{c}\beta_i\\0\end{array}\right]\] for some $\beta_i\neq0$. Moreover, we can choose $C_i$ in a way that $\beta_i=1$ for all $i$. So we will assume that \[C_i\cdot w_i= \left[\begin{array}{c}1\\0\end{array}\right].\]

Recall that in \cite{Ath13}, Athreya proved that the set \[
\Omega^{M}=\{gSL(X,\omega)\mid g(X,\omega)\text{ has a horizontal saddle connection of length $\le1$}\}\]
is a Poincar\'e section for the action of  horocycle flow on the moduli space $SL(2,\RR)/SL(X,\omega)$. 
By using the above decomposition of the set of saddle connections on $(X,\omega)$  we will write
\[
\Omega^{M} = \Omega_1^{M} \cup \cdots \cup \Omega_n^{M},\]
where
\[\Omega_i^{M} = \{gSL(X,\omega)\mid g\Lambda_{sc}^{\pm w_i}(X,\omega)\cap(0,1]\neq\emptyset\}.
\]

\noindent For each $i\in\{1,\dotsc,n\}$ we define the following subset of $\RR^2$: 
\[\Omega_i=\{(a,b)\in\mathbb{R}^{2}\mid 0<a\le1\ \text{and}\  1-(\alpha_i)a<b\le1\}.\]

Now, assume that the Veech group $SL(X,\omega)$ does not contain $-Id$. Hence for each $i$, $\Gamma_i\cong\mathbb{Z}$, and for some $i$, the generator $P_i$ can possibly have eigenvalue $-1$. Now, possibly after a renaming, assume that for $i=1,\dotsc,k$, the generator $P_i$ has eigenvalue $1$, and for $i=k+1,\dotsc,n$, the generator $P_i$ has eigenvalue $-1$, where $1\le k\le n$. 

Then, by conjugating each $P_i$ with an element $C_i\in SL(2,\RR)$ we obtain

\[S_i=C_iP_iC_i^{-1} = \left[\begin{array}{cc}1 & \alpha_i \\ 0 & 1\end{array}\right]\]
for $i=1,\dotsc,k$, and, 
\[
S_i=C_iP_iC_i^{-1} = \left[\begin{array}{cc}-1 & \alpha_i \\ 0 & -1\end{array}\right]\] for $i=k+1,\dotsc,n$. 
Again,  we will assume that each $\alpha_i>0$. Similar to the first case, we have \[C_i\cdot w_i=\left[\begin{array}{c}1\\0\end{array}\right].\]
Now for each $i=1,\dotsc,k$ define $\Omega_i$ as follows: 

\[\Omega_i:=\{(a,b)\in\mathbb{R}^{2}\mid 0<a\le1\ \text{and}\  1-(\alpha_i)a<b\le1\}\cup\{(a,b)\in\mathbb{R}^{2}\mid -1\le a<0\ \text{and}\  1+(\alpha_i)a<b\le1\}.\]

\noindent For $i=k+1,\dotsc,n$, define $\Omega_i$ as follows:
\[
\Omega_i:=\{(a,b)\in\mathbb{R}^{2}\mid 0<a\le1, \  1-2(\alpha_i)a<b\le1\}\]

\begin{namedtheorem}[Theorem \ref{parametrizingsection}]\label{bijection}
Let $(X,\omega)$ be a Veech surface such that the Veech group $SL(X,\omega)$ has $n<\infty$ cusps. There are coordinates from the section $\Omega^M$ to the set $\bigsqcup_{i=1}^{n}\Omega_i$. More precisely, the bijection between $SL(2,\RR)/SL(X,\omega)$ and $SL(2,\RR)\cdot (X,\omega)$ sends $\Omega_{i}^M$ to $\{M_{a,b}C_{i}(X,\omega)\mid(a,b)\in\Omega_i\}$, and the latter set is bijectively parametrized by $\Omega_i$. The return time function is piecewise rational with pieces defined by linear equations in these coordinates. The limiting gap distribution for any Veech surface is piecewise real analytic.
\end{namedtheorem}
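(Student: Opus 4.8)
The plan is to run, in the abstract, the argument used for $\mathcal{L}$ in Section~\ref{poincare}, replacing the two explicit cusps by the $n$ maximal parabolic subgroups $\Gamma_1,\dots,\Gamma_n$ and the explicit conjugators by the matrices $C_i$. First I would establish the parametrization. Given $gSL(X,\omega)\in\Omega_i^M$, by definition $g(X,\omega)$ carries a horizontal saddle connection of length $\le 1$ lying in $\Lambda_{sc}^{\pm w_i}(X,\omega)$; since $SL(X,\omega)$ acts transitively on that orbit and right multiplication by a Veech-group element leaves $g(X,\omega)$ unchanged, I may assume this saddle connection is $g(\pm w_i)=(a,0)^T$. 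Using $C_iw_i=(1,0)^T$, the matrix $gC_i^{-1}$ sends $(1,0)^T$ to $(\pm a,0)^T$, and every such element of $SL(2,\RR)$ has the form $M_{a',b}$; hence $g(X,\omega)=M_{a',b}C_i(X,\omega)$ with $a'=\pm a$. The relation $C_iP_iC_i^{-1}=S_i$ together with the identity $M_{a,b}S_i^{\,n}=M_{a,\,b+n\alpha_i a}$ lets me translate $b$ into the strip $1-\alpha_i a<b\le1$ (of width exactly $\alpha_i a$) by post-composing with the Veech element $P_i^{\,n}$, landing $(a',b)$ in $\Omega_i$; conversely, for any $(a,b)\in\Omega_i$ the image of $w_i$ is $(a,0)^T$, a short horizontal, so $M_{a,b}C_i(X,\omega)\in\Omega_i^M$. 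In the case $-Id\notin SL(X,\omega)$ the two signs of $w_i$ give genuinely distinct surfaces, which is why a $+1$-eigenvalue cusp acquires the extra component $-1\le a<0$, whereas at a $-1$-eigenvalue cusp $P_i$ already identifies the $\pm w_i$ directions and doubles the translation length, producing the strip $1-2\alpha_i a<b\le1$. For injectivity, if $M_{a,b}C_i(X,\omega)=M_{c,d}C_i(X,\omega)$ then $\delta:=C_i^{-1}M_{c,d}^{-1}M_{a,b}C_i\in SL(X,\omega)$, and $C_i\delta C_i^{-1}=M_{c,d}^{-1}M_{a,b}$ is upper triangular, so $\delta$ fixes the parabolic fixed point $C_i^{-1}\infty$ and hence lies in $\Gamma_i$; writing $\delta=\pm P_i^{\,n}$ forces $M_{c,d}^{-1}M_{a,b}=\pm S_i^{\,n}$, comparing the positive upper-left entries rules out the minus sign, and the width of the defining strip forces $n=0$, giving $(a,b)=(c,d)$.

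For the return time I would argue as in the octagon case: the first return of $h_sM_{a,b}C_i(X,\omega)$ to $\Omega^M$ occurs at $s$ equal to the smallest positive slope of a saddle connection on $M_{a,b}C_i(X,\omega)$ with horizontal component $\le1$. A saddle connection of $C_i(X,\omega)$ with holonomy $(p,q)$ becomes, after $M_{a,b}$, a vector of slope $\dfrac{a^{-1}q}{ap+bq}$ and horizontal component $ap+bq$, so each candidate contributes a \emph{rational} return-time branch. The decisive observation is that when two candidate slopes are compared the $b$-terms cancel, reducing the comparison to the $(a,b)$-independent sign of $p_1q_2-p_2q_1$; equivalently $M_{a,b}\in SL(2,\RR)$ preserves the cyclic order of directions, so the minimizer among the candidates is governed by the original slope order and only the constraints $ap+bq\le1$ can switch which saddle connection is active. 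These constraints, together with $\partial\Omega_i$, are \emph{linear} in $(a,b)$, which yields the piecewise-rational description with pieces cut out by linear equations.

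For piecewise real-analyticity of the gap distribution I would invoke equidistribution of long horocycle orbits (Sarnak~\cite{S81}, Dani--Smillie~\cite{DS84}) exactly as in Theorems~\ref{4.1} and~\ref{compthm}, which applies to the lattice $SL(X,\omega)$, so that the limiting tail equals $m(\{(a,b)\in\Omega:R(a,b)\ge t\})$ for the normalized Lebesgue measure $m$. On each piece the level set $R=t$ is the conic $pa^2+qab=q/t$, i.e. $b=\tfrac{1}{ta}-\tfrac{p}{q}a$, which varies real-analytically in $t$; hence $m(\{R\ge t\})$ is an integral over a region bounded by these analytic curves and the linear edges of the piece, and it depends real-analytically on $t$ as long as the combinatorial type of that region stays fixed. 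The type changes only at the $t$ where a level conic passes through a vertex of a piece or becomes tangent to one of its edges, so the tail, and therefore its derivative $f$, is real-analytic between consecutive such values.

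The hardest part will be the two finiteness statements, without which ``piecewise'' is not yet meaningful: that only finitely many saddle connections ever realize the minimal slope on $\Omega_i$ (which I expect to extract from the discreteness of $\Lambda_{sc}(X,\omega)$, the boundedness of $\Omega_i$, and the uniform lower bound on the return time), and, as a consequence, that only finitely many transition values of $t$ occur. The other delicate point is the bookkeeping for the $-Id$-free cusps, where one must keep track of the extra $a<0$ strip at $+1$-eigenvalue cusps and the doubled width at $-1$-eigenvalue cusps so that both the bijection and the area computation for $m$ go through cleanly.
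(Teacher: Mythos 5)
Your proposal is correct and follows essentially the same route as the paper's proof: the same transitivity-plus-$M_{a,b}$ parametrization with the parabolic $S_i$ used to translate $b$ into the strip, the same stabilizer-of-infinity injectivity argument, the same case analysis according to whether $-Id\in SL(X,\omega)$ and the eigenvalue of $P_i$ (extra $-1\le a<0$ component versus doubled strip width), and the same identification of the return time with the smallest slope $\frac{y}{a(ax+by)}$, giving rational pieces over linearly-defined regions whose areas under the return-time hyperbolas yield piecewise real analyticity. The two finiteness statements you flag as the hardest remaining part are not established in the paper's proof either (they are verified for the octagon by direct computation and merely asserted in the general theorem), so your cyclic-order observation and proposed finiteness argument go, if anything, beyond the paper's own level of detail.
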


\begin{proof}We divide the proof into two cases. \\
\noindent\textbf{Case 1.} $SL(X,\omega)$ contains $-Id$.\\
 
Suppose that $gSL(X,\omega)\in \Omega_i^M$. We will first show that $g(X,\omega)=M_{a,b}C_i(X,\omega)$ for some $(a,b)\in\Omega_i$. By definition of the $\Omega_{i}^{M}$, $g(X,\omega)$ has a horizontal saddle connection 
\[
\left[\begin{array}{c}a\\0\end{array}\right],\text{ with }0< a\leq1,
\] such that  
\[
\left[\begin{array}{c}a\\0\end{array}\right] = g\gamma (\pm w_i)
\] for some $\gamma \in SL(X,\omega)$. Since $-Id\in SL(X,\omega)$, the vector $-w_i$ is in $SL(X,\omega)$ orbit of the vector $w_i$, and $\Lambda_{sc}^{\pm w_i}(X,\omega)=\Lambda_{sc}^{w_i}(X,\omega)$.  Hence we can assume that  
\[
\left[\begin{array}{c}a\\0\end{array}\right] = g\gamma w_i.
\]
Since $SL(X,\omega)$ acts on $\Lambda_{sc}^{w_i}(X,\omega)$ transitively and for any element $\gamma \in SL(X,\omega)$ we have $g\gamma (X,\omega) = g(X,\omega)$, we can assume that \[
\left[\begin{array}{c}a\\0\end{array}\right] = gw_i=gC^{-1}_i\left[\begin{array}{c}1\\0\end{array}\right].
\]
The elements in $SL(2, \RR)$ which take $\left[\begin{array}{c}1\\0\end{array}\right]$ to $\left[\begin{array}{c}a\\0\end{array}\right]$ are precisely of the form
\[
M_{a,b}=\left[\begin{array}{cc}a & b \\ 0 & 1/a\end{array}\right],\]
 where $b \in \RR$. Therefore, $gC_i^{-1}=M_{a,b}$ and hence $g(X,\omega)=M_{a,b} C_i(X,\omega)$ with $0<a\le1$. 

Since the parabolic element $S_{i} = \left[\begin{array}{cc}1 & \alpha_i \\ 0 & 1\end{array}\right]$ is in the Veech group of $C_i(X,\omega)$, and \[
M_{a,b}S_i^{-n} = \left[\begin{array}{cc}a & b-(\alpha_i)an \\ 0 & a^{-1}\end{array}\right],
\]
we can write $g(X,\omega)=M_{a,b}C_i(X,\omega)$, where $(a,b) \in \Omega_i$, as required.

In order to see that for every $(a,b)\in\Omega_i$, the element $M_{a,b}C_i(X,\omega)$ lies in $\Theta(\Omega_i^{M})$, we first note that \[
C_i\cdot w_i=\left[\begin{array}{c}1\\0\end{array}\right]
\] is a horizontal saddle connection in $C_i(X,\omega)$ with length at most $1$. Since $M_{a,b}$ sends this to a saddle connection of length $a\leq1$, it follows that $M_{a,b}C_i(X,\omega)\in\Theta(\Omega_i^{M})$.

 

Finally, we prove that $\Omega_i$ bijectively parametrizes $\{M_{a,b}C_i(X,\omega)\mid (a,b)\in\Omega_i\}$. We suppose that $(a,b),(c,d)\in\Omega_i$ and that $M_{a,b}C_i(X,\omega)=M_{c,d}C_i(X,\omega)$, and must prove that $(a,b)=(c,d)$. We have \[
M_{a,b}M_{c,d}^{-1} = \left[\begin{array}{cc}\frac{a}{c} & bc-ad \\ 0 & \frac{c}{a}\end{array}\right] \in C_iSL(X,\omega)C^{-1}_i,
\] which fixes infinity in the upper half-plane. Since $S_i$ generates the infinite cyclic factor in $C_i\Gamma_iC^{-1}_i$ and $a/c>0$ it follows that $a=c$. From this we see that $1-(\alpha_i)a < b,d \leq 1$, and $(b-d)a = k \alpha_i$, which implies $b = d$, and hence $(a,b) = (c,d)$, as required.\\

\noindent\textbf{Case 2.} $-Id\notin SL(X,\omega)$. \\
\noindent\textbf{Subcase 2.1.}
Suppose that $gSL(X,\omega)\in\Omega_{i}^{M}$ for some $i=1,\dotsc,k$.

By definition of the set $\Omega_i^{M}$, $g(X,\omega)$ has a saddle connection \[\left[\begin{array}{c}a\\0\end{array}\right],\ \text{with}\ 0<a\le1,\] such that either  \[\left[\begin{array}{c}a\\0\end{array}\right]=g\gamma w_i\ \text{or}\ \left[\begin{array}{c}a\\0\end{array}\right]=g\gamma (-w_i).\] Since $SL(X,\omega)$ acts transitively on both $\Lambda_{sc}^{w_i}(X,\omega)$ and $\Lambda_{sc}^{-w_i}(X,\omega)$, as in the first case we can assume that either \[\left[\begin{array}{c}a\\0\end{array}\right]=gw_i\ \text{or}\ \left[\begin{array}{c}a\\0\end{array}\right]=g(-w_i).
\] 
This means that either \[\left[\begin{array}{c}a\\0\end{array}\right]=gC_i^{-1}\left[\begin{array}{c}1\\0\end{array}\right]\ \text{or}\ \left[\begin{array}{c}a\\0\end{array}\right]=gC_i^{-1}\left[\begin{array}{c}-1\\0\end{array}\right].
\]
The set of matrices that take $\left[\begin{array}{c}1\\0\end{array}\right]$ to $\left[\begin{array}{c}a\\0\end{array}\right]$ are of the form \[
M_{a,b}=\left[\begin{array}{cc}a& b \\ 0 & 1/a\end{array}\right],\]
 for some $b$. Similarly, the set of matrices that take $\left[\begin{array}{c}-1\\0\end{array}\right]$ to $\left[\begin{array}{c}a\\0\end{array}\right]$ are of the form 
\[
M_{-a,b}=\left[\begin{array}{cc}-a & b \\ 0 & -1/a\end{array}\right],\]
for some $b$.  Therefore we have either $gC_i^{-1}=M_{a,b}$ or $gC_i^{-1}=M_{-a,b}$, and hence either $g(X,\omega)=M_{a,b}C_i(X,\omega)$ or $g(X,\omega)=M_{-a,b}C_i(X,\omega)$ with $0<a\le1$, where the latter can be rewritten as $g(X,\omega)=M_{a,b}C_i(X,\omega)$ with $-1\leq a<0$. 

Since the parabolic element $S_{i} = \left[\begin{array}{cc}1 & \alpha_i \\ 0 & 1\end{array}\right]$ is in the Veech group of $C_i(X,\omega)$, and \[
M_{a,b}S_i^{\pm n} = \left[\begin{array}{cc}a & b\pm(\alpha_i)an \\ 0 & a^{-1}\end{array}\right],
\]
we can write $g(X,\omega)=M_{a,b}C_i(X,\omega)$ where $(a,b) \in \Omega_i$, as required. 

In order to see that for every $(a,b)\in\Omega_i$, the element $M_{a,b}C_i(X,\omega)$ lies in $\Theta(\Omega_i^{M})$, we first note that \[
C_i(\pm w_i)=\left[\begin{array}{c}\pm1\\0\end{array}\right]
\]
is a horizontal saddle connection in $C_i(X,\omega)$ with length at most $1$. Since $M_{a,b}$ sends this to a saddle connection of length $a\leq1$, it follows that $M_{a,b}C_i(X,\omega)\in\Theta(\Omega_i^{M})$.

Finally, we prove that $\Omega_i$ bijectively parametrizes $\{M_{a,b}C_i(X,\omega)\mid (a,b)\in\Omega_i\}$. We suppose that $(a,b),(c,d)\in\Omega_i$ and that $M_{a,b}C_i(X,\omega)=M_{c,d}C_i(X,\omega)$, and must prove that $(a,b)=(c,d)$. We have 
\[
M_{a,b}M_{c,d}^{-1} = \left[\begin{array}{cc}\frac{a}{c} & bc-ad \\ 0 & \frac{c}{a}\end{array}\right] \in C_iSL(X,\omega)C^{-1}_i,
\]
which fixes infinity in the upper half-plane. Since $S_i$ generates the the maximal parabolic subgroup $C_i\Gamma_iC^{-1}_i$ it follows that $a=c$. From this we see that $1-(\alpha_i)a < b,d \leq 1$, and $(b-d)a = k \alpha_i$, which implies $b = d$, and hence $(a,b) = (c,d)$, as required.\\

\noindent \textbf{Subcase 2.2.} Suppose that $gSL(X,\omega)\in\Omega_i^{M}$ for some $i=k+1,\dotsc,n$.

An identical argument as above says that we can write $g(X,\omega)=M_{a,b}C_i(X,\omega)$ where either $0<a<1$ or $-1\le a<0$. 

Observe that \[
M_{a,b}\left[\begin{array}{cc}-1 & \alpha_i \\ 0 & -1\end{array}\right]=\left[\begin{array}{cc}a & b \\ 0 & 1/a\end{array}\right]\left[\begin{array}{cc}-1 & \alpha_i \\ 0 & -1\end{array}\right]=\left[\begin{array}{cc}-a & a\alpha_i-b \\ 0 & -1/a\end{array}\right]=M_{-a,a\alpha_i-b}.
\]
Hence, for any point $(a,b)$ where $-1\leq a<0$, there is a point $(a',b')=(-a,a\alpha_i-b)$ with  $0<a'\le1$ such that $M_{a,b}C_i(X,\omega)=M_{a',b'}C_i(X,\omega)$. Moreover, the element  \[
S_i^{-2}=\left[\begin{array}{cc}1 & 2\alpha_i \\ 0 & 1\end{array}\right]\in C_iSL(X,\omega)C^{-1}_i,\]
thus for $gSL(X,\omega)\in\Omega_i^{M}$ we have $g(X,\omega)=M_{a,b}C_i(X,\omega)$ for some $(a,b)\in\Omega_i$. 

Finally, we prove that $\Omega_i$ bijectively parametrizes $\{M_{a,b}C_i(X,\omega)\mid (a,b)\in\Omega_i\}$. We suppose that $(a,b),(c,d)\in\Omega_i$ and that $M_{a,b}C_i(X,\omega)=M_{c,d}C_i(X,\omega)$, and must prove that $(a,b)=(c,d)$. We have  \[
M_{a,b} \cdot M_{c,d}^{-1} = \left[\begin{array}{cc}\frac{a}{c} & bc-ad \\ 0 & \frac{c}{a}\end{array}\right] \in C_iSL(X,\omega)C^{-1}_i,\]
 which fixes infinity in the upper half-plane. Since $S_i$ generates the the maximal parabolic subgroup $C_i\Gamma_iC^{-1}_i$, and $a/c>0$ it follows that $a=c$. From this we see that $1-2(\alpha_i)a < b,d \leq 1$, and $(b-d)a = 2k \alpha_i$, which implies $b = d$, and hence $(a,b) = (c,d)$, as required.\\
 
As we observed in the Proof of Theorem \ref{Lgaps}, in these coordinates, the return time function at a point $M_{a,b}C_i(X,\omega)$ is given by the smallest slope of a saddle connection in the vertical strip. Hence it is of the form 
\[
\frac{y}{a(ax+by)},
\]
where $\left[\begin{array}{c}x\\y\end{array}\right]$ is a saddle connection on $C_i(X,\omega)$. Note that the return time function is composed of rational pieces defined on convex polygons. Since the cumulative distribution function is given by the total area bounded by the return time hyperbolas and these convex polygons, it is piecewise real analytic. Hence, the limiting gap distribution function for any Veech surface is real analytic, which finishes the proof. 

\end{proof}

\bibliographystyle{abbrv}
\bibliography{gapsreferences}
\end{document}